\title{A Note on Element Centralizers in Finite Coxeter Groups.}
\author{Matja\v{z} Konvalinka}
\address{M.K.: 
Department of Mathematics,
Vanderbilt University, Nashville TN, USA}
\email{matjaz.konvalinka@vanderbilt.edu}
\author{G\"otz Pfeiffer}
\author{Claas E. R\"over}
\address{G.P., C.E.R.: 
School of Mathematics, Statistics and Applied Mathematics,
NUI Galway,
University Road, Galway, Ireland}
\email{goetz.pfeiffer@nuigalway.ie, claas.roever@nuigalway.ie}
\let\emptyset\varnothing
\newcommand{\Size}[1]{\left|#1\right|}
\newcommand{\Span}[1]{\left<#1\right>}
\newcommand{\Sym}{\mathfrak{S}}
\let\AA\relax
\newcommand{\AA}{\mathcal{A}}
\newcommand{\DD}{\mathcal{D}}
\newcommand{\N}{\mathbb{N}}
\newcommand{\R}{\mathbb{R}}
\renewcommand{\labelenumi}{(\roman{enumi})}
\newcommand{\Fix}{\mathop{\mathrm{Fix}}}
\newcommand{\diag}{\mathop{\mathrm{diag}}}
\newcommand{\partition}{$ be a partition of $}
\newcommand{\dpartition}{$ be a double partition of $}
\newtheorem{Theorem}{Theorem}[section]
\newtheorem{Proposition}[Theorem]{Proposition}
\newtheorem{Lemma}[Theorem]{Lemma}
\theoremstyle{definition}
\newtheorem{Definition}[Theorem]{Definition}
\numberwithin{equation}{section}
\begin{document}

\begin{abstract}
  The normalizer $N_W(W_J)$ of a standard parabolic subgroup $W_J$ of
  a finite Coxeter group $W$ splits over the parabolic subgroup with
  complement $N_J$ consisting of certain minimal length coset
  representatives of $W_J$ in $W$.  In this note we show that (with
  the exception of a small number of cases arising from a situation in
  Coxeter groups of type $D_n$) the centralizer $C_W(w)$ of an element
  $w \in W$ is in a similar way a semidirect product of the
  centralizer of $w$ in a suitable small parabolic subgroup $W_J$ with
  complement isomorphic to the normalizer complement~$N_J$. 
  Then we use this result to give a new short proof of Solomon's 
  Character Formula and discuss its connection to MacMahon master theorem.
\end{abstract}

\keywords{Coxeter group, Solomon's Character formula, MacMahon master theorem}

\maketitle

\section{Introduction}
\label{sec:intro}

Let $W$ be a finite Coxeter group, generated by a set of simple
reflections $S$ with length function $\ell \colon W \to \N\cup\{0\}$.  Each
subset $J \subseteq S$ generates a 
so-called standard parabolic
subgroup $W_J = \Span{J}$ of $W$.
Conjugates of standard parabolic subgroups are called parabolic
subgroups.  These subgroups are themselves Coxeter groups and
therefore play an important role in the structure theory of finite
Coxeter groups.  A well-known property of the cosets of a standard
parabolic subgroup $W_J$ in $W$ is that each coset contains a unique
element of minimal length.  The subgroup $W_J$ hence
possesses a
distinguished right transversal $X_J$, consisting of the
minimal length coset representatives.
Due to a theorem of Howlett~\cite{Howlett1980} and later work of Brink
and Howlett~\cite{BrinkHowlett1999}, it is known that and how the
normalizer $N_W(W_J)$ of the parabolic subgroup $W_J$ is a semidirect
product of $W_J$ and a subgroup $N_J$ consisting of precisely those
minimal length coset representatives $x \in X_J$ which leave $J$ as
subset of $W$ invariant in the conjugation action of $W$ on its
subsets, i.e., $N_J = \{x \in X_J: J^x = J\}$.

In this note we show that most centralizers of elements in $W$ enjoy a
similar semidirect product decomposition.  Pfeiffer and
R\"ohrle~\cite{PfeifferRoehrle2005} have shown, based on
Richardson's~\cite{Richardson1982} characterization of involutions as
central longest elements of parabolic subgroups of $W$, that if $w \in
W$ is an involution then its centralizer in $W$ coincides with the
normalizer of a parabolic subgroup, and as such is a semidirect
product.  This note can be regarded as a generalization of the result
for involutions to all elements of~$W$.  Our results effectively
reduce questions regarding the conjugacy classes of elements in a
finite Coxeter group $W$ to the cuspidal conjugacy classes, that is
those conjugacy classes which are disjoint from any proper parabolic
subgroup of~$W$.  Cuspidal conjugacy classes of elements of $W$ play a
central role in the algorithmic approach to the conjugacy classes of
finite Coxeter groups in Chapter~3 of the book by Geck and
Pfeiffer~\cite{GePf2000}.  We refer the reader to this book as a
general introduction to the theory of finite Coxeter groups.

We will call certain conjugacy classes of elements of a finite Coxeter
group $W$ \emph{non-compliant}; see Definition~\ref{def:non-compliant}.
  Without exception, these are
conjugacy classes of $W$ which nontrivially intersect a parabolic
subgroup of type $D_n$ with $n > 4$.  Hence, if $W$ has no parabolic
subgroups of type $D$, part~(ii) of the following theorem applies
without restrictions. We can now formulate our main theorem as follows.

\begin{Theorem} \label{thm:1} Let $W$ be a finite Coxeter group and
  let $w \in W$.  Let $V$ be the smallest parabolic subgroup of $W$
  that contains $w$.  Then the following hold.
\begin{enumerate}
\item The centralizer $C_V(w) = C_W(w) \cap V$ is a normal
  subgroup of the centralizer $C_W(w)$ with quotient $C_W(w)/C_V(w)$
  isomorphic to the normalizer quotient $N_W(V)/V$.
\item The centralizer $C_W(w)$ splits over $C_V(w)$ with complement
  isomorphic to $N_W(V)/V$ unless $w$ lies in a non-compliant conjugacy
  class of elements of~$W$.
\end{enumerate}
\end{Theorem}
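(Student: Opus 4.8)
The plan is to recast the statement as a fact about the conjugation action of the normalizer $N_W(V)$ on a single conjugacy class inside $V$, to deduce part~(i) formally from that fact, and to reduce part~(ii) to the existence of a well‑placed representative, which is then settled type by type.  First I would conjugate in $W$ so that $V=W_J$ is a \emph{standard} parabolic.  The pointwise stabilizer of the fixed space $\Fix(w)$ of $w$ in the reflection representation is a parabolic subgroup (Steinberg; see~\cite{GePf2000}); it contains $w$ and is contained in every parabolic containing $w$, so it equals $V$.  If $g\in C_W(w)$ then $g$ stabilizes $\Fix(w)$ (because $g^{-1}\Fix(w)=\Fix(g^{-1}wg)=\Fix(w)$), hence normalizes $V$, so $C_W(w)\le N_W(V)$.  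Moreover $w$ has no nonzero fixed vector in the reflection representation of $V$, i.e. $w$ is \emph{elliptic}, equivalently cuspidal, in $V$.  By the theorems of Howlett~\cite{Howlett1980} and Brink--Howlett~\cite{BrinkHowlett1999}, $N_W(V)=N_W(W_J)=W_J\rtimes N_J$, and each $x\in N_J$ satisfies $J^x=J$, so conjugation by $x$ is a length‑preserving graph automorphism of $W_J$.  Writing $\mathcal C$ for the $W_J$‑class of $w$, the first key point is that the conjugation action of $N_W(W_J)$ on $W_J$ maps $\mathcal C$ to itself; I would prove this by passing to the irreducible factors of $W_J$ (which $N_J$ permutes, matching isomorphic ones) and noting that cuspidal classes of irreducible finite Coxeter groups are distinguished by graph‑automorphism invariants — cycle data in the classical types, rotation order in the dihedral type — with the single subtle case of triality in type $D_4$ checked directly.

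Granting this, part~(i) is immediate: $W_J$ acts transitively on $\mathcal C$, so the $N_W(W_J)$‑orbit of $w$ is all of $\mathcal C$, the stabilizer of $w$ in $N_W(W_J)$ is exactly $C_W(w)$, and it meets $W_J$ in $C_{W_J}(w)=C_V(w)$.  Hence $C_V(w)\trianglelefteq C_W(w)$, and $W_J\cdot C_W(w)=N_W(W_J)$ gives $C_W(w)/C_V(w)\cong N_W(W_J)/W_J\cong N_W(V)/V$.  For part~(ii) I would observe that a complement appears as soon as some $W_J$‑conjugate of $w$ is centralized by the whole of $N_J$: if $w'=w^v$ with $v\in W_J$ satisfies $N_J\le C_W(w')$, then $vN_Jv^{-1}\le C_W(w)$; it meets $W_J$, and so $C_V(w)$, trivially because $N_J$ does; and $\Size{vN_Jv^{-1}}=\Size{N_J}=\Size{N_W(W_J)}/\Size{W_J}=\Size{C_W(w)}/\Size{C_V(w)}$ by part~(i), so $vN_Jv^{-1}$ is a complement isomorphic to $N_J\cong N_W(V)/V$.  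Thus (ii) reduces to: \emph{the cuspidal class $\mathcal C$ contains an element centralized by $N_J$}; and, splitting off the irreducible factors of $W_J$ once more (an $N_J$‑orbit of isomorphic factors contributing one diagonal choice), to the statement that for each irreducible type, each cuspidal class, and each group of automorphisms of the factor that actually arises from $N_J$, there is a fixed point in that class.

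One then checks this holds in types $A$, $B$, the dihedral types and the exceptional types, but can fail in type $D_n$, and precisely the failures occur for certain classes that nontrivially meet a parabolic of type $D_n$ with $n>4$; these are the non‑compliant classes of Definition~\ref{def:non-compliant}.  The main obstacle is exactly this $D_n$ analysis.  The cuspidal classes of $W(D_n)$ are the classes of products of negative cycles, necessarily an even number of them, with the wrinkle that some $W(B_n)$‑classes of this shape split into two $W(D_n)$‑classes; the relevant outer element, induced by a sign change from an ambient hyperoctahedral group, acts by reversing the sign of one negative cycle or by interchanging negative cycles of equal length.  One must determine, up to $W(D_n)$‑conjugacy, exactly which cuspidal classes carry an element fixed by this action, see that the exceptions force $n>4$, and match them with the non‑compliant classes (and, for sharpness of the ``unless'', check that no complement exists in those cases).

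A secondary, bookkeeping‑heavy point, already needed for part~(i), is to pin down which automorphisms of an irreducible factor $W_{J_i}$ are genuinely realized by $N_J$ rather than merely available abstractly: for instance the graph automorphism of a type‑$A$ factor is realized through the longest element of that factor and hence acts harmlessly on the cuspidal class, and similar care is needed for dihedral factors.  This is what ensures that no spurious obstruction arises outside type $D$, and it is where the structure of $N_W(W_J)$ from~\cite{Howlett1980,BrinkHowlett1999} is used in an essential way rather than formally.
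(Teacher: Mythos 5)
Your route to part~(i) is sound and genuinely different from the paper's: instead of the double--coset argument via Theorem~\ref{2.1.12}, you get $C_W(w)\leq N_W(V)$ geometrically (Steinberg: $V$ is the pointwise stabilizer of $\Fix(w)$), and you then need only that $N_J$ preserves the cuspidal $W_J$-class of $w$, which you propose to check type by type in place of the paper's appeal to the non-fusion Theorem~\ref{3.2.11}. That check is essentially viable (one small point you gloss over: when $N_J$ permutes isomorphic components of $W_J$ you need that such duplicated components carry a unique cuspidal class; this is true because repeated components are necessarily of type $A$, a connected subdiagram of any other type requiring the unique multiple bond or branch node).

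The genuine gap is in part~(ii). Your reduction demands an element of the cuspidal class $\mathcal{C}\subseteq W_J$ centralized by the subgroup $N_J$ itself, so that the complement is $vN_Jv^{-1}$ with $v\in W_J$; this condition is strictly stronger than what is true and already fails in type $B$, which you claim to ``check''. Take $W=W(B_3)$ and $w=(1,2,3)$, a positive $3$-cycle, so $W_J=\Sym_3=\langle s_1,s_2\rangle$, $N_W(W_J)=\Sym_3\times\langle w_0\rangle$ with $w_0=-1$ central, and $N_J=\langle w_Jw_0\rangle$. Conjugation by $w_Jw_0$ acts on $\Sym_3$ as conjugation by $w_J=(1,3)$, and no $3$-cycle commutes with $(1,3)$ (the centralizer of an $m$-cycle in $\Sym_m$ is the cyclic group it generates, of odd order when $m$ is odd), so \emph{no} element of $\mathcal{C}$ is centralized by $N_J$; the same argument kills every class whose $\lambda^+$ has an odd part $m\geq 3$. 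Yet these classes are compliant and a complement exists: it is $N_\lambda$ of Proposition~\ref{pro:b}, obtained by replacing the generator $r(o_m,m)\,t(o_m,m)$ of $N_J$ by $t(o_m,m)$, i.e.\ by multiplying with an element of $W_J$; in the $B_3$ example the complement is the central $\langle w_0\rangle$, which is not conjugate to $N_J$ at all. So complements of $W_J$ in its normalizer are not all conjugate, and the centralizing one is in general not of the form $vN_Jv^{-1}$: you must allow the generators of $N_J$ to be corrected by elements of $W_J$, exactly as in the paper's Proposition~\ref{pro:M}. As written, your case check would wrongly report failure in type $B$ (and in type $D$ your analysis is aimed at the wrong place: the non-compliant classes are not cuspidal in $W(D_n)$ --- they live in parabolics of type $D_k\times A_{m-1}$, and the obstruction there is the absence of involutions in $C_W(w)\setminus C_{W_J}(w)$, cf.\ Proposition~\ref{prop:non-compliant}, not a fixed-point question for cuspidal classes of $W(D_n)$ under the outer sign change).
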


The parabolic subgroup $V$ in the theorem is well-defined as the
intersection of all parabolic subgroups of $W$ that contain $w$, due
to the fact that intersections of parabolic subgroups are parabolic
subgroups, see Theorem~\ref{2.1.12} below.  For the proof of
the theorem, we will assume that $w$ has \emph{minimal length} in its
conjugacy class in $W$.  Then $V$ is the \emph{standard} parabolic
subgroup $W_J$ of $W$, where $J = J(w)$, the set of generators
occurring in a reduced expression for $w$.  The proof of part~(i) is
carried out in Section~\ref{sec:centralizers}.  Part~(ii) of the
theorem is established case by case in
Section~\ref{sec:cases}.  The results for the exceptional types of
Coxeter groups have been obtained with the help of computer programs
using the GAP~\cite{GAP} package CHEVIE~\cite{chevie}.  These programs
are available through the second author's ZigZag~\cite{zigzag} package.
In Section~\ref{sec:application}, we use Theorem~\ref{thm:1} to
provide a new short proof of a theorem of Solomon, and then discuss
its relation to MacMahon master theorem~\cite[page 98]{macmahon}. 

\section{Preliminaries.}
\label{sec:prelim}

In this section we recall some results about distinguished coset
representatives and conjugacy classes in a finite Coxeter group $W$,
generated by a set of simple reflections $S$ and with length function
$\ell$.  

For $w \in W$, we set $J(w) = \{s_1, \dots, s_l\} \subseteq S$, if $w
= s_1 \dots s_l$ is a reduced expression, i.e., if $l = \ell(w)$.
As a consequence of Matsumoto's theorem, $J(w)$ does not depend
on the choice of a reduced expression for~$w$.

For $w \in W$, let 
\begin{align*}
  \DD(w) = \{s \in S : \ell(sw) < \ell(w)\}
\end{align*}
be its \emph{descent set}, and
let 
\begin{align*}
   \AA(w) = \{s \in S : \ell(sw) > \ell(w)\} = S \setminus \DD(w)
\end{align*}
be its \emph{ascent set}.
The set 
\begin{align*}
  X_J = \{ w \in W : J \subseteq \AA(w) \}
\end{align*}
is a right transversal for $W_J$ in $W$, consisting of the
elements of minimal length in each coset.  For each element $w \in W$
there are unique elements $u \in W_J$ and $x \in X_J$ such that $w = u
\cdot x$.  Here the explicit multiplication dot indicates that the
product $ux$ is \emph{reduced}, i.e., that $\ell(ux) = \ell(u) +
\ell(x)$.  An immediate consequence is the following lemma.

\begin{Lemma}[\protect{\cite[Lemma 2.1.14]{GePf2000}}] 
  \label{2.1.14}
  Let $J \subseteq S$.  Then $\ell(w^x) \geq \ell(w)$ for all $w \in
  W_J$, $x \in X_J$.
\end{Lemma}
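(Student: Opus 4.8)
The plan is to read the inequality straight off the unique reduced factorization $W = W_J \cdot X_J$ recalled immediately before the statement. The only preliminary point that needs a word is this: for \emph{every} $w \in W_J$ and \emph{every} $x \in X_J$ the product $wx$ is already reduced, i.e.\ $\ell(wx) = \ell(w) + \ell(x)$. Indeed, since $w \in W_J$, the coset $W_J(wx) = W_J x$ has $x$ as its unique element of minimal length; so if $wx = u' \cdot x'$ is the unique factorization with $u' \in W_J$ and $x' \in X_J$, then $x'$ lies in the right coset $W_J x$ and must equal its minimal-length element $x$, whence $u' = w$. Thus $w \cdot x$ is itself the reduced factorization of $wx$, and $\ell(wx) = \ell(w) + \ell(x)$.

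Next I would rewrite the conjugate as $wx = x\,(x^{-1}wx) = x\,w^x$ and apply the triangle inequality $\ell(ab) \le \ell(a) + \ell(b)$ for the length function. This gives
\[
  \ell(w) + \ell(x) = \ell(wx) = \ell(x\,w^x) \le \ell(x) + \ell(w^x),
\]
and cancelling $\ell(x)$ yields $\ell(w^x) \ge \ell(w)$, which is the claim.

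There is essentially no obstacle here: the substantive input is the uniqueness of the $W_J\cdot X_J$-factorization, which forces $\ell(wx)=\ell(w)+\ell(x)$ for arbitrary $w\in W_J$, $x\in X_J$; the rest is just subadditivity of $\ell$. This is why the text can call the lemma an immediate consequence of the transversal description. (If one preferred to avoid even invoking uniqueness explicitly, one could instead argue directly that $x \in X_J$ and $w \in W_J$ cannot cancel on the right of $wx$ because $x$ has no descent in $J$, but the factorization argument above is the cleanest route given what has been set up.)
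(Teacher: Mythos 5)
Your proof is correct: the reduced factorization $\ell(wx)=\ell(w)+\ell(x)$ for $w\in W_J$, $x\in X_J$, combined with subadditivity of $\ell$ applied to $wx = x\,w^x$, gives the inequality immediately. The paper does not prove this lemma itself (it cites \cite[Lemma 2.1.14]{GePf2000} and calls it an immediate consequence of the $W_J\cdot X_J$ factorization), and your argument is exactly that intended immediate consequence, so there is nothing further to compare.
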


We denote the longest element of $W$ by $w_0$.
For $J \subseteq  S$, we denote by $w_J$ the  longest element of the
parabolic subgroup $W_J$. 

\begin{Lemma}
  \label{1.5.2}
  Let $w \in W$ and let $J = \DD(w)$.  Then $w = w_J \cdot x$ for some
  $x \in X_J$.  
\end{Lemma}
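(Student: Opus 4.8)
The plan is to combine the canonical $W_J$--$X_J$ factorization recalled above with the characterization of $w_J$ as the longest element of $W_J$. Set $J = \DD(w)$ and write $w = u \cdot x$ for the unique pair with $u \in W_J$ and $x \in X_J$; it then suffices to identify $u$ with $w_J$, since then $w = w_J \cdot x$ is exactly the asserted factorization. As $u \in W_J$ is a product of generators from $J$, we automatically have $\DD(u) \subseteq J(u) \subseteq J$. On the other hand, $w_J$ is the unique element of $W_J$ whose left descent set equals all of $J$ (the standard description of the longest element, applied inside the Coxeter group $W_J$; cf.\ \cite{GePf2000}), so it is enough to prove the reverse inclusion $J \subseteq \DD(u)$.

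To that end, fix $s \in J = \DD(w)$ and suppose, towards a contradiction, that $\ell(su) > \ell(u)$, i.e.\ $\ell(su) = \ell(u) + 1$. Then $su$ again lies in $W_J$ while $x$ still lies in $X_J$, so $sw = (su)\,x$ exhibits $sw$ as a product of an element of $W_J$ with an element of $X_J$; by the uniqueness (and reducedness) of such factorizations, $(su)\,x$ must be the canonical factorization of $sw$, so $\ell(sw) = \ell(su) + \ell(x) = \ell(u) + 1 + \ell(x) = \ell(w) + 1$. This contradicts $s \in \DD(w)$. Hence $\ell(su) < \ell(u)$ for every $s \in J$, so $J \subseteq \DD(u) \subseteq J$, giving $\DD(u) = J$ and therefore $u = w_J$.

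Mechanically this is a single length computation, and the one point needing a little care is the assertion that $(su)\,x$ is automatically a reduced product: this is precisely the content of the uniqueness half of the $W_J$--$X_J$ decomposition, namely that an arbitrary product of an element of $W_J$ with an element of $X_J$ is reduced and coincides with the canonical factorization of its value. The only other ingredient, the description of $w_J$ by its full left-descent set, is entirely standard. I do not anticipate a genuine obstacle here.
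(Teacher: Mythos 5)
Your proof is correct. Note, though, that the paper does not actually argue this lemma itself: its ``proof'' is a one-line citation of \cite[Lemma 1.5.2]{GePf2000} (which says that if every $s \in J$ is a descent of $w$, then $w = w_J \cdot (w_J w)$ is a reduced product) together with \cite[Proposition 2.1.1]{GePf2000}. Your route is therefore a genuinely self-contained alternative: you start from the canonical $W_J$--$X_J$ factorization $w = u \cdot x$ already recalled in the paper's preliminaries, show by the length-additivity and uniqueness of that factorization that every $s \in \DD(w)$ must be a descent of $u$ (since otherwise $\ell(sw) = \ell(su) + \ell(x) = \ell(w) + 1$), and then identify $u$ with $w_J$ via the standard characterization of the longest element of $W_J$ as the unique element of $W_J$ with full descent set. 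The two small points you lean on are both sound: any pair $(v,x) \in W_J \times X_J$ with $vx = w$ is automatically the canonical (hence reduced) factorization, because $x$ must be the unique minimal-length representative of the coset $W_J w$; and $\DD(u) \subseteq J(u) \subseteq J$ for $u \in W_J$. What the paper's citation buys is brevity; what your argument buys is that everything is derived from facts explicitly stated in Section~2, except for the descent-set characterization of $w_J$, which is itself essentially the content of the cited \cite[Lemma 1.5.2]{GePf2000} specialized to $W_J$ --- so the two approaches are close in substance even though yours spells out the argument.
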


\begin{proof}
  This follows from \cite[Lemma 1.5.2]{GePf2000} and
\cite[Proposition 2.1.1]{GePf2000}
\end{proof}

For $J,K\subseteq  S$ define  $X_{JK} = X_J \cap  X_K^{-1}$.  Then
$X_{JK}$ is  a set of  minimal length double coset  representatives of
$W_J$ and $W_K$ in $W$.

\begin{Theorem}[\protect{\cite[Theorem 2.1.12]{GePf2000}}] 
  \label{2.1.12}
  Let $J, K \subseteq S$ and  let $x \in X_{JK}$. Then $W_J^x \cap W_K
  = W_L$, where $L = J^x \cap K$.
\end{Theorem}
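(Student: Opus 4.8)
The plan is to establish the equality $W_J^x \cap W_K = W_L$ by proving the two inclusions separately. The inclusion $W_L \subseteq W_J^x \cap W_K$ is immediate from $L = J^x \cap K$: since $L \subseteq K$ we have $W_L \le W_K$, and since $L \subseteq J^x = \{s^x : s \in J\}$ every generator of $W_L$ has the form $x^{-1} s x$ with $s \in J$, so $W_L \le x^{-1} W_J x = W_J^x$. The content of the theorem is therefore the reverse inclusion.

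For that, I would take $d \in W_J^x \cap W_K$ and argue by induction on $\ell(d)$, the case $d = 1$ being trivial. Set $a = x d x^{-1}$, so that $a \in W_J$. Since $d = a^x$ with $x \in X_J$, Lemma~\ref{2.1.14} gives $\ell(d) \ge \ell(a)$; since $a = d^{x^{-1}}$ with $d \in W_K$ and $x^{-1} \in X_K$, the same lemma gives $\ell(a) \ge \ell(d)$. Hence $\ell(a) = \ell(d)$. As $d \ne 1$ lies in $W_K$, it has a right descent $s$, which lies in $K$; I claim $s \in L$. If not, then $s \in K \setminus L$ forces $xs \in X_J$: for $x \in X_J$ and a simple reflection $s$ one has the dichotomy that either $xs \in X_J$ or $xsx^{-1}$ is a simple reflection lying in $J$ (equivalently $x^{-1}\alpha_t = \alpha_s$ for some $t \in J$), and the latter would put $s$ into $J^x \cap K = L$. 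Granting $xs \in X_J$, I compute $\ell(xds)$ in two ways: on one hand $ds \in W_K$ has length $\ell(d) - 1$ and $x^{-1} \in X_K$, so $\ell(xds) = \ell(x) + \ell(ds) = \ell(xd) - 1$; on the other hand $xds = a\,(xs)$ with $a \in W_J$, $xs \in X_J$ and $\ell(xs) = \ell(x) + 1$, so $\ell(xds) = \ell(a) + \ell(xs) = \ell(d) + \ell(x) + 1 = \ell(xd) + 1$. This contradiction proves $s \in L$. Finally, since $s \in L \subseteq J^x$ we have $xsx^{-1} \in J$, so $x(ds)x^{-1} = a\,(xsx^{-1}) \in W_J$ and thus $ds \in W_J^x \cap W_K$; as $\ell(ds) < \ell(d)$, the inductive hypothesis gives $ds \in W_L$, whence $d = (ds)s \in W_L$.

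The main obstacle is precisely the dichotomy invoked above — that for $s \in K$ either $s \in L$ and $xsx^{-1} \in J$, or $s \notin L$ and $xs \in X_J$ — since this is the only step that needs genuine Coxeter-theoretic input (the interaction of a minimal coset representative with a simple reflection on the root system, or equivalently a short exchange-condition argument) beyond Lemma~\ref{2.1.14} and the length-additivity of the $X_J$- and $X_K$-decompositions; everything else is length bookkeeping. As a sanity check one notes that $x^{-1} \in X_{KJ}$ and $(J^x \cap K)^{x^{-1}} = J \cap K^{x^{-1}}$, so both the statement and the argument are symmetric under $(J, K, x) \mapsto (K, J, x^{-1})$.
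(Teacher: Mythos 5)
The paper itself gives no proof of this statement: it is imported verbatim from \cite[Theorem 2.1.12]{GePf2000} (Kilmoyer's theorem), so there is no in-paper argument to compare against and your proof must stand on its own. It does: the easy inclusion, then induction on $\ell(d)$ for $d \in W_J^x \cap W_K$, with Lemma~\ref{2.1.14} applied twice (to $x \in X_J$ and to $x^{-1} \in X_K$) to get $\ell(xdx^{-1}) = \ell(d)$, is essentially the standard argument, and the dichotomy you invoke --- for $x \in X_J$ and $s \in S$, either $xs \in X_J$ or $xsx^{-1}$ is a simple reflection in $J$ --- is exactly Deodhar's lemma (Lemma 2.1.2 in \cite{GePf2000}), so quoting it is legitimate and you correctly flag it as the only genuinely Coxeter-theoretic input. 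The one step you should spell out is the assertion $\ell(xs) = \ell(x)+1$: membership $xs \in X_J$ alone does not give this (a minimal coset representative may well be shortened by a simple reflection on the right), and without it the two computations of $\ell(xds)$ need not clash, so the contradiction would evaporate. It does hold here, for the same reason your other length computations work: $s \in K$ and $x^{-1} \in X_K$, so $\ell(xs) = \ell(sx^{-1}) = \ell(x^{-1}) + 1 = \ell(x)+1$. With that one line added the proof is complete; your closing symmetry observation about $(J,K,x) \mapsto (K,J,x^{-1})$ is also correct.
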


\begin{Theorem}[\protect{\cite[Theorem 2.3.3]{GePf2000}}] 
  \label{2.3.3}
  Suppose $J, K$ are conjugate subsets of $S$ and that $x \in X_J$ is
  such that $J^x = K$.  If $s \in \DD(x)$ then $x = d \cdot y$, where
  $d = w_J w_L$ for $L= J \cup \{s\}$, and $y \in X_L$.
\end{Theorem}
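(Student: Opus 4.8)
The plan is to reduce the assertion to a statement about the left descent set $\DD(w_J x)$ of the element $w_J x$. Put $L = J \cup \{s\}$; since $x \in X_J$ and $s \in \DD(x)$ we have $s \notin J$, so $W_J$ is a proper subgroup of $W_L$. I will use two standard facts about parabolic transversals: that $\ell(vy) = \ell(v) + \ell(y)$ for all $v \in W_L$ and $y \in X_L$, and that $w_J w_L \in X_J$ with $\ell(w_J w_L) = \ell(w_L) - \ell(w_J)$, so that $w_L = w_J \cdot (w_J w_L)$ is a reduced product. Write $w_J x = v \cdot y$ for the unique reduced factorization with $v \in W_L$ and $y \in X_L$. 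By the first fact, for $r \in L$ one has $r \in \DD(w_J x)$ if and only if $\ell(rv) < \ell(v)$, i.e.\ $r \in \DD(v)$; since $\DD(v) \subseteq L$, this shows that $v = w_L$ precisely when $L \subseteq \DD(w_J x)$, and in that case $x = w_J w_L \cdot y$, the product being reduced by a length count (using $\ell(w_J x) = \ell(w_J) + \ell(x)$, valid as $x \in X_J$, together with $\ell(w_J w_L) = \ell(w_L) - \ell(w_J)$). So it suffices to prove $L \subseteq \DD(w_J x)$, and then we may take $d = w_J w_L$ and $y$ as above.

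The inclusion $J \subseteq \DD(w_J x)$ is routine: for $t \in J$ the element $t w_J$ lies in $W_J$ and has length $\ell(w_J) - 1$ (as $\DD(w_J) = J$), and since $x \in X_J$ the product $t w_J \cdot x$ is reduced, so $\ell(t w_J x) = \ell(w_J x) - 1$. The essential point is $s \in \DD(w_J x)$, and here the hypothesis $J^x = K \subseteq S$ is used. Conjugation by $x^{-1}$ is a length-preserving isomorphism of Coxeter systems $(W_J, J) \to (W_K, K)$, hence sends $w_J$ to $w_K$; thus $x^{-1} w_J x = w_K$, that is, $w_J x = x w_K$. Moreover $x^{-1} \in X_K$: for $k \in K$ we may write $k = x^{-1} j x$ with $j \in J$, so $\ell(k x^{-1}) = \ell(x^{-1} j) = \ell(j x) > \ell(x) = \ell(x^{-1})$, as $x \in X_J$. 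Hence $\ell(x w_K) = \ell(w_K x^{-1}) = \ell(w_K) + \ell(x)$. Now $s \in \DD(x)$ gives $\ell(sx) = \ell(x) - 1$, so $\ell(s\,w_J x) = \ell((sx)\,w_K) \le \ell(sx) + \ell(w_K) = \ell(x w_K) - 1 = \ell(w_J x) - 1$, and therefore $s \in \DD(w_J x)$. Together with the previous inclusion this yields $L \subseteq \DD(w_J x)$, completing the proof, with $d = w_J w_L$ and $y$ the distinguished representative produced above.

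The step I expect to be the crux is obtaining $s \in \DD(w_J x)$. Without the conjugation identity $w_J x = x w_K$ one sees only $J \subseteq \DD(w_J x)$, which is insufficient, and in fact the conclusion can fail for some $x \in X_J$ with $s \in \DD(x)$ once $J^x$ is a \emph{non-standard} parabolic subgroup. The delicate ingredient is precisely that $K = J^x$ must be a subset of $S$: this is what makes $x^{-1} \in X_K$ meaningful and yields the length identity $\ell(x w_K) = \ell(w_K) + \ell(x)$, which is false for non-standard parabolics. Thus the hypothesis that $J$ and $K$ are conjugate \emph{subsets of $S$} cannot be relaxed.
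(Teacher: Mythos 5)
Your proof is correct. Note that the paper itself offers no argument for this statement: it is quoted verbatim from \cite[Theorem 2.3.3]{GePf2000}, so there is no internal proof to compare yours against. Judged on its own, your reduction is sound: writing $w_J x = v \cdot y$ with $v \in W_L$, $y \in X_L$ and observing that $\DD(w_J x) \cap L = \DD(v)$ correctly reduces everything to showing $L \subseteq \DD(w_J x)$, and the subsequent length bookkeeping (using $\ell(w_J x) = \ell(w_J) + \ell(x)$ and $\ell(w_J w_L) = \ell(w_L) - \ell(w_J)$) does yield the reduced factorization $x = w_J w_L \cdot y$ with $y \in X_L$. The crucial step $s \in \DD(w_J x)$ is also right: conjugation by $x^{-1}$ carries $J$ bijectively onto $K \subseteq S$, hence is length-preserving and sends $w_J$ to $w_K$, giving $w_J x = x w_K$; your verification that $x^{-1} \in X_K$ (via $\ell(k x^{-1}) = \ell(j x) > \ell(x)$ for $k = x^{-1} j x$) legitimately gives $\ell(x w_K) = \ell(x) + \ell(w_K)$, and then $\ell(s w_J x) = \ell((sx) w_K) \leq \ell(x) - 1 + \ell(w_K) = \ell(w_J x) - 1$ as claimed. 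Your closing remark is also apt: the hypothesis $J^x = K \subseteq S$ is genuinely needed (e.g.\ $J = \{s_1\}$, $x = s_2$ in type $A_2$ already violates the conclusion), so the place where you invoke it is exactly where it must enter.
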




For the conjugacy classes of $W$, we are particularly interested in
elements of minimal length.  These elements have useful properties,
such as the following.

\begin{Proposition}[\protect{\cite[Corollary 3.1.11]{GePf2000}}] 
  \label{3.1.11}
  Let $C$ be a conjugcay class of $W$ and let $w, w'$ be elements of
  minimal length in $C$.  Then $J(w') = J(w)^x$ for some $x \in
  X_{J(w), J(w')}$.
\end{Proposition}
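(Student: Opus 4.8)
The plan is to conjugate $w$ to $w'$, replace the conjugating element by a minimal length double coset representative, and then read off the relation between the supports from Theorem~\ref{2.1.12}. Write $J = J(w)$ and $K = J(w')$, so that $w \in W_J$ and $w' \in W_K$. Choose $g \in W$ with $w^g = w'$, pick $x \in X_{JK}$ with $g \in W_J x W_K$, and write $g = uxv$ with $u \in W_J$ and $v \in W_K$; no statement about the length of this factorization is needed. A direct computation gives $(w^u)^x = (w')^{v^{-1}}$, so setting $z = w^u \in W_J$ and $z' = (w')^{v^{-1}} \in W_K$ we have $z^x = z'$, while $z$ is conjugate to $w$ and $z'$ is conjugate to $w'$ in $W$.

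Next I would extract the supports. Since $z \in W_J$ we have $z^x \in W_J^x$, and since $z' = z^x$ also lies in $W_K$, Theorem~\ref{2.1.12} gives $z' \in W_J^x \cap W_K = W_L$ with $L = J^x \cap K$; in particular $J(z') \subseteq L \subseteq K$. The same reasoning applied with $x^{-1} \in X_{KJ}$ (note $X_{JK}^{-1} = X_{KJ}$) yields $z \in W_K^{x^{-1}} \cap W_J = W_{J \cap K^{x^{-1}}}$, so $J(z) \subseteq J \cap K^{x^{-1}} \subseteq J$. It remains to promote these to $J(z) = J$ and $J(z') = K$, and this is the point at which minimal length is used: if $y \in W$ has minimal length in its conjugacy class then $W_{J(y)}$ is the smallest parabolic subgroup of $W$ containing $y$, which is exactly the fact recorded for $V$ after Theorem~\ref{thm:1}. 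Since $z$ is $W$-conjugate to $w$ and $z \in W_{J(z)}$, the element $w$ lies in a conjugate of $W_{J(z)}$; as $W_J$ is the smallest parabolic containing $w$, this forces $\Size{W_J} \le \Size{W_{J(z)}}$, and hence $J(z) = J$ because $J(z) \subseteq J$. Therefore $J \subseteq K^{x^{-1}}$, i.e.\ $J^x \subseteq K$. Symmetrically, $J(z') = K$ and $K \subseteq L \subseteq J^x$, so $J^x = K$. Since $x \in X_{JK} = X_{J(w),J(w')}$, this is the assertion.

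The only ingredient here that is not pure double coset bookkeeping is the rigidity statement used in the second paragraph, which I would take from the standard theory of minimal length elements in conjugacy classes of finite Coxeter groups. Accordingly, I expect the real obstacle to lie not in the deduction of Proposition~\ref{3.1.11} above, but in that input --- namely that a minimal length element of $C$ cannot be conjugated into a proper parabolic subgroup of $W_{J(w)}$; one natural route is an induction on $\ell(w)$ built on the observation that conjugating such a $w$ by a simple reflection $s\in\DD(w)$ returns an element of the same length and the same support, with Theorem~\ref{2.3.3} used to control the conjugators.
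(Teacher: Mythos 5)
There is nothing in the paper to compare your argument against line by line: the paper does not prove this proposition at all, it simply imports it as \cite[Corollary 3.1.11]{GePf2000}. Judged on its own, your derivation is correct. The double coset bookkeeping is sound: writing $g=uxv$ with $x\in X_{JK}$, the identity $(w^u)^x=(w')^{v^{-1}}$ holds, the two applications of Theorem~\ref{2.1.12} (once with $x\in X_{JK}$, once with $x^{-1}\in X_{KJ}=X_{JK}^{-1}$) give $J(z)\subseteq J\cap K^{x^{-1}}$ and $J(z')\subseteq J^x\cap K$, and the promotion to $J(z)=J$, $J(z')=K$ via the fact that $W_{J(w)}$ is the smallest parabolic subgroup containing a minimal-length element $w$ does yield $J^x\subseteq K$ and $K\subseteq J^x$, hence $J^x=K$ with $x\in X_{J(w),J(w')}$ as required.

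The one caveat concerns the status of that rigidity input, and you have identified it correctly as the real content. It is exactly the assertion the paper makes after Theorem~\ref{thm:1}, and it rests on \cite[Proposition 3.2.12]{GePf2000} (quoted in Section~\ref{sec:prelim}): the class of a minimal-length element $w$ in $W_{J(w)}$ is cuspidal in $W_{J(w)}$, together with the routine consequence of Theorem~\ref{2.1.12} that a parabolic subgroup of $W$ contained in $W_J$ is a parabolic subgroup of $W_J$. So what you have is a clean reduction of Corollary 3.1.11 to Proposition 3.2.12, not an independent proof: in Geck--Pfeiffer both statements belong to the same circle of nontrivial results on minimal-length elements (established there in part by case-by-case analysis), so presenting your argument as a proof from scratch would risk circularity, and the closing sketch (induction on $\ell(w)$ with conjugation by descents, controlled via Theorem~\ref{2.3.3}) is only a plan for that hard ingredient, not a proof of it. Since the paper itself treats both 3.1.11 and 3.2.12 as black boxes from the book, your reduction is a legitimate and even illuminating way to organize the dependence, provided you state explicitly which citation is carrying the weight.
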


A conjugacy class $C$ of elements of $W$ is called a \emph{cuspidal class} if $C \cap W_J = \emptyset$ for all proper subsets $J$ of $S$. 
Cuspidal classes never fuse in the following sense.

\begin{Theorem}[\protect{\cite[Theorem 3.2.11]{GePf2000}}] 
  \label{3.2.11}
  Let $J \subseteq  S$ and let $w \in W_J$ be  such that the conjugacy
  class $C_J$ of $w$ in $W_J$ is cuspidal in $W_J$.  Then
\[
C_J = C \cap W_J,
\]
where $C$ is the conjugacy class of $w$ in $W$.
\end{Theorem}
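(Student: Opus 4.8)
The plan is to prove the two inclusions $C_J\subseteq C\cap W_J$ and $C\cap W_J\subseteq C_J$ separately. The first is immediate, since two elements of $W_J$ that are conjugate in $W_J$ are conjugate in $W$. For the reverse inclusion I would take $w'\in C\cap W_J$, write $w'=w^g$ with $g\in W$, and decompose $g$ according to the double coset it lies in: $g=uxv$ for some $u,v\in W_J$ and $x\in X_{JJ}$, the distinguished representative of $W_JgW_J$. Then $w'=\bigl((w^u)^x\bigr)^v$, so $(w^u)^x=(w')^{v^{-1}}\in W_J$ and $(w^u)^x$ is $W_J$-conjugate to $w'$; since $w^u$ is $W_J$-conjugate to $w$ and hence lies in $C_J$, it therefore suffices to show that $w^u$ and $(w^u)^x$ are conjugate in $W_J$.

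The key point is to locate $x$. Writing $z=(w^u)^x\in W_J$ we have $w^u=z^{x^{-1}}$, and $x^{-1}\in X_{JJ}$ as well; hence $w^u\in W_J^{x^{-1}}\cap W_J$, which by Theorem~\ref{2.1.12}, applied to the double coset representative $x^{-1}$ with both subsets equal to $J$, is the standard parabolic subgroup $W_L$ of $W_J$ with $L=J^{x^{-1}}\cap J$. Since $C_J$ is cuspidal in $W_J$ and $w^u\in C_J\cap W_L$, the set $L$ cannot be a proper subset of $J$; so $L=J$, that is $J\subseteq J^{x^{-1}}$, and comparing cardinalities yields $J^x=J$. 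Thus $x$ normalises $W_J$ and, being a minimal-length coset representative, lies in the normaliser complement $N_J=\{x\in X_J:J^x=J\}$.

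It remains to prove that conjugation by an element of $N_J$ stabilises the cuspidal class $C_J$. Applying Lemma~\ref{2.1.14} to $x$ and to $x^{-1}$ (note $v^x\in W_J$ because $x$ normalises $W_J$) shows that conjugation by $x$ is length-preserving on $W_J$; it therefore permutes the length-one elements of $W_J$, which are exactly the members of $J$, and so acts on $W_J$ as an automorphism of its Coxeter diagram, possibly interchanging isomorphic components and applying a diagram automorphism within each. Now a class of $W_J$ is cuspidal precisely when it consists of \emph{elliptic} elements, those having no nonzero fixed vector on the reflection representation, and such a class is a product of elliptic classes over the irreducible components of $W_J$. Components interchanged by $x$ are conjugate subsets of $S$ lying in a common irreducible component of $W$ (conjugation in $W$ acts componentwise), and since a finite irreducible Coxeter diagram has at most one branch node and at most one multiple bond, such components are of type $A$, where the elliptic class is unique; hence permuting components moves no cuspidal class. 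Finally, within a single irreducible component a diagram automorphism preserves characteristic polynomials on the reflection representation and so fixes every elliptic class: this is automatic in type $A$, where the automorphism is inner, and follows in type $D_n$ by realising the automorphism inside the Weyl group of type $B_n$, while in types $D_4$, $E_6$ and $F_4$ it has to be checked directly. I expect this last case-by-case verification --- equivalently, confirming from the known descriptions of conjugacy classes that the normaliser action stabilises every cuspidal class of $W_J$ --- to be the main obstacle; everything preceding it is uniform and uses only Theorem~\ref{2.1.12}, Lemma~\ref{2.1.14} and standard facts about double cosets.
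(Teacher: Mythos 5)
The paper itself does not prove this statement: Theorem~\ref{3.2.11} is imported from \cite{GePf2000}, and Section~\ref{sec:centralizers} notes explicitly that there it is established by a careful case-by-case analysis resting on the classification of cuspidal classes. So there is no internal proof to compare with; what can be said is how your route relates to the way the paper uses the result. Your first two paragraphs are correct, and they are essentially the proof of Theorem~\ref{thm:2} run in reverse: the decomposition $g=uxv$ with $x\in X_{JJ}$, the application of Theorem~\ref{2.1.12} to $x^{-1}$, and cuspidality forcing $J^x=J$ is the same computation the paper performs, and it shows that Theorem~\ref{3.2.11} is equivalent to the statement that every cuspidal class of $W_J$ is stable under conjugation by $N_J$. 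That stability is the real content of the theorem, and it is exactly the part Geck and Pfeiffer settle case by case; your plan localizes the case analysis much more sharply (length preservation via Lemma~\ref{2.1.14} gives a diagram automorphism of $W_J$; cuspidal equals elliptic; interchanged components are of type $A$ and type $A$ has a unique cuspidal class), which is a genuine gain if the residue is handled correctly.

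As written, though, the last step has concrete gaps. First, the inference ``preserves characteristic polynomials on the reflection representation and so fixes every elliptic class'' is not valid in general, since characteristic polynomials need not separate classes; you in effect abandon it in favour of case checks, so it should simply be dropped. Second, your residual list is off in both directions: the flip of an $E_6$ component is conjugation by its longest element, hence inner, and an $F_4$ component of a parabolic is necessarily a full irreducible component of $W$ (no larger irreducible diagram contains $F_4$), so the induced automorphism there is inner and that case never arises; on the other hand you omit dihedral components $I_2(m)$ (including $B_2$, $G_2$, and parabolics of $H_3$, $H_4$), where an induced flip is possible --- harmless, since in a dihedral group every rotation is conjugate to its inverse, but it must be said. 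Third, the reduction of type $D_n$ to $B_n$ needs the fact that a class with $\lambda^+=\emptyset$ does not split on restriction from $W(B_n)$ to $W(D_n)$ (splitting requires $\lambda^-=\emptyset$ and $\lambda^+$ even, as recorded in Section~\ref{sec:intro-d}); this should be invoked explicitly, and $D_4$ triality still needs its own (easy, e.g.\ by element orders) verification. Finally, the equivalence ``cuspidal $\iff$ elliptic'' is an external ingredient (Steinberg's theorem on fixed spaces and parabolic subgroups) not contained in the paper and should be cited. With these repairs your argument is a legitimate proof with a much smaller case analysis than the book's, but it is not gap-free as it stands.
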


If $w$ is of minimal length in its conjugacy class, then it is also of
minimal length in its conjugacy class in the Coxeter group $W_{J(w)}$,
which by \cite[Proposition 3.2.12]{GePf2000} is a cuspidal class of
$W_{J(w)}$

Below, we review some basic facts about Coxeter groups of classical
type, that is of type $A$, $B$ or $D$.  For a more detailed review of
the combinatorics of the conjugacy classes of finite Coxeter groups of
classical type we refer the reader to the
description~\cite{Pfeiffer1994} of the implementation of the character
tables of these groups in GAP.

\subsection{Type \texorpdfstring{$A$}{A}.}  
\label{sec:intro-a}

Suppose $W$ is a Coxeter group of type
$A_{n-1}$.  Then $W$ is isomorphic to the symmetric group $\Sym_n$ on
the $n$ points $[n] = \{1, \dots, n\}$, with Coxeter generators $s_i =
(i, i+1)$, $i = 1, \dots, n-1$.  The \emph{cycle type} of a
permutation $w \in W$ is the partition of $n$, which contains a part
$l$ for each $l$-cycle of $w$, where fixed points count as $1$-cycles.
Since any two elements of $w$ are conjugate in $W$ if and only if they
have the same cycle type, the conjugacy classes of elements of $W$ are
naturally parametrized by the partitions of $n$.  

Here, it will be convenient to write partitions as weakly \emph{increasing}
sequences.  Given a partition $\lambda =
(\lambda_1, \dots, \lambda_t)$ of $n$ (that is a sequence of positive
integers $\lambda_1 \leq \dots \leq \lambda_t$ with $\lambda_1 + \dots
+ \lambda_t = n$), there is a corresponding parabolic subgroup $W_J =
\Sym_{\lambda_1} \times \dots \times \Sym_{\lambda_t}$ containing an
element $w$ with cycle type $\lambda$.  A particular element of
minimal length in this conjugacy class is 
the product $w_{\lambda}$ of $t$ disjoint cycles
consisting of $\lambda_i$ successive points, for $i = 1, \dots, t$.
For example, a minimal length representative of the conjugacy class 
of elements with cycle structure
$1124$  in $\Sym_8$ is $w_{1124} = (1)(2)(3,4)(5,6,7,8) = (3,4)(5,6,7,8)$.

Note that $w_{\lambda}$ is a Coxeter element of $W_J$,
the product
\begin{align} \label{eq:wa}
  w_{\lambda} = \prod_{s_i \in J} s_i
\end{align}
(in \emph{decreasing} order) of all $s_i \in J$. For example,
$J(w_{1124}) = \{s_3, s_5, s_6, s_7\}$, and $w_{1124} = s_7 s_6 s_5
s_3$.

\subsection{Type \texorpdfstring{$B$}{B}.} 
\label{sec:intro-b}

Suppose $W$ is a Coxeter group of type $B_{n}$.
Then $W$ is isomorphic to the group of permutations on $\{-n,\dots,-1,0,1, \dots, n\}$
satisfying $w(-i) = - w(i)$. Alternatively, we can represent this group
as the group of signed permutations, i.e. injective maps from $[n]$ to 
$[n] \cup -[n]$ with precisely one of $i$ and $-i$ in the image. Since the elements
are permutations, we can write them in cyclic form. We have two types of cycles: 
cycles which do not contain $i$ and $-i$ for any $i$, and cycles in which $i$ is
an element if and only if $-i$ is an element. Cycles of the first type come in 
natural pairs, and instead of $(i_1,i_2,\dots,i_k)(-i_1,-i_2,\dots,-i_k)$, we write
$(i_1,i_2,\dots,i_k)$ and call it a positive cycle. Cycles of the second type are
of the form $(i_1, i_2 \dots,i_k,-i_1,-i_2,\dots,-i_k)$. We shorten that to 
$(i_1,i_2,\ldots,i_k)^-$ and call it a negative cycle. For example, the permutation
$$-4 \mapsto -2, \: -3 \mapsto 1, \: -2 \mapsto 4, \: -1 \mapsto 3, \: 0 \mapsto 0, \: 1 \mapsto -3, \: 2 \mapsto -4, \: 3 \mapsto -1, \: 4 \mapsto 2$$
is written as $(1,-3)(2,-4)^-$. In this notation, every
signed permutation looks like an ordinary permutation in cyclic form, except that every element
and every cycle can have a minus sign. Note that we can change the sign of all elements
in a cycle without changing the signed permutation. The Coxeter generators are 
$t_1 = (1)^-$ and $s_i = (i,i+1)$, $i = 1, \dots, n-1$.
We also set $t_i = (i)^-$, for $i > 1$.

An element $w \in W(B_n)$ can also be represented in the form of a
signed permutation matrix.  This is an $n \times n$ matrix which acts
on the standard basis $\{e_1, \dots, e_n\}$ of $\R^n$ in the same way
as the permutation $w$ acts on the points $[n] = \{1, \dots, n\}$, i.e., for $i \in [n]$, it
maps $e_i$ to $e_{\Size{w(i)}}$ or its negative, depending on whether
  $w(i)$ is positive or negative.  We will briefly use this matrix
  representation of $W(B_n)$ in Section~\ref{sec:n-c}.

Since conjugation on a signed permutation in cyclic form works in the same way as
with usual permutations (if we conjugate with $w$, an element $i$ of any cycle is 
replaced by $w(i)$), two signed permutations are conjugate if and only if
they have the same number of negative cycles of every length, and the
same number of positive cycles of every length. The \emph{cycle type} of a permutation $w \in W$ is a double partition 
$\lambda = (\lambda^+,\lambda^-)$ with $\Size{\lambda^+} + \Size{\lambda^-} = n$,
so that $\lambda^+$ contains a part $l$ for each
positive $l$-cycle of $w$, and $\lambda^-$ contains a part $l$ for each
negative $l$-cycle of $w$. Two elements of $W$ are conjugate in $W$ if and only if they have the
same cycle type, and therefore the conjugacy classes of elements of $W$ are
naturally parametrized by the double partitions of $n$. For example, the conjugacy
class of $(1,5,-2)(4,7)(3)^-(6,-8)^-$ is $(21,32)$. 

Take $J \subseteq \{t_1,s_1,s_2,\ldots,s_{n-1}\}$ and $w \in W_J$. If $s_i \notin J$, the elements 
$i+1,\ldots,n$ appear in positive cycles of $w$ with all positive elements. Therefore, if we are given a double partition
$(\lambda^+,\lambda^-)$ of $n$, $\lambda^+ = (\lambda_1^+, \dots, \lambda_t^+)$,
$\lambda^- = (\lambda_1^-, \dots, \lambda_s^-)$, the smallest parabolic 
subgroup $W_J$ that contains an element of cycle type $(\lambda^+,\lambda^-)$ is of the form $W(B_{\Size{\lambda^-}}) \times \Sym_{\lambda_1^+} 
\times \dots \times \Sym_{\lambda_t^+}$. 
According to the description \cite[3.4.2]{GePf2000}
of conjugacy classes of $W$,
there is a minimal length representative
$w_{\lambda}$
of the corresponding conjugacy class of the following form. The negative cycles
contain $1,\dots,\Size{\lambda^-}$, and the positive cycles contain $\Size{\lambda^-}+1,\dots,n$;
furthermore, each cycle contains only consecutive numbers in increasing order. For example, a minimal length
representative of the conjugacy class corresponding to $\lambda = (112,23)$ is 
$w_{\lambda} = (1,2)^-(3,4,5)^-(6)(7)(8,9)$.

\subsection{Type \texorpdfstring{$D$}{D}.} 
\label{sec:intro-d}

Suppose $W$ is a Coxeter group of type $D_{n}$.  Then $W$ is
isomorphic to the group of permutations on $\{-n,\dots,-1,0,1, \dots,
n\}$ satisfying $w(-i) = - w(i)$ and with an even number of $i>0$
satisfying $w(i)<0$. Alternatively, we can represent this group as the
group of signed permutations with an even number of $i$ mapping to
$-[n]$. These are precisely the signed permutations with an even number
of negative cycles. The Coxeter generators are $u = (1, -2)$ and $s_i
= (i,i+1)$, $i = 1, \dots, n-1$.  The \emph{cycle type} of a
permutation $w \in W$ is a double partition $(\lambda^+,\lambda^-)$
with $\Size{\lambda^+} + \Size{\lambda^-} = n$, so that $\lambda^+$ contains a
part $l$ for each positive $l$-cycle of $w$, and $\lambda^-$ contains
a part $l$ for each negative $l$-cycle of $w$. If two elements of $W$
are conjugate, they have the same cycle type. Having the same cycle type,
however, is not a sufficient condition for conjugacy. For example, $u$ and $s_1$
have the same cycle type but are not conjugate. It is easy to see that if
they have the same cycle type $(\lambda^+,\lambda^-)$ and $\Size{\lambda^-} > 0$ or
$\lambda^+$ contains an odd part, they are conjugate. If they have the
same cycle type $(\lambda^+,\emptyset)$, where $\lambda^+$ contains
only even parts, they are conjugate if and only if the number of
negative numbers in their cycle decomposition has the same parity.

We call a partition \emph{even} if it consists of even parts only.
The conjugacy classes of elements of $W$ are naturally parametrized by
double partitions of $n$, where $\lambda^-$ has an even number of
parts, with two classes when $\lambda^- = \emptyset$ and $\lambda^+$
is even.  Given a double partition $(\lambda^+,\lambda^-)$ of $n$,
$\lambda^+ = (\lambda_1^+, \dots, \lambda_t^+)$, $\lambda^- =
(\lambda_1^-, \dots, \lambda_s^-)$, $s$ even, the corresponding
parabolic subgroup $W_J$ is of the form $W(D_{\Size{\lambda^-}}) \times
\Sym_{\lambda_1^+} \times \dots \times \Sym_{\lambda_t^+}$.
Furthermore, there is a minimal length representative $w_{\lambda}$ of the
corresponding conjugacy class of the following form. The negative
cycles contain $1,\dots,\Size{\lambda^-}$, the positive cycles contain
$\Size{\lambda^-}+1,\dots,n$, and each cycle contains only
consecutive numbers in increasing order. If $\lambda^- = \emptyset$
and $\lambda^+$ has only even parts, then there is an extra representative
$w_{\lambda}'$ with the first positive cycle starting with $-1$ instead of $1$.  For
example, for $\lambda = (112,23)$, we have $w_{\lambda} = (1,2)^-(3,4,5)^-(6)(7)(8,9)$, and
for $(224,\emptyset)$, we have $w_{\lambda} = (1,2)(3,4)(5,6,7,8)$ and
$w_{\lambda}' = (-1,2)(3,4)(5,6,7,8)$.

\section{Centralizers.}
\label{sec:centralizers}

In this section we prove a general theorem about the structure of
element centralizers in finite Coxeter groups.  It is shown to be a
consequence of Theorem~\ref{3.2.11}, which in the book~\cite{GePf2000}
 has
been established by a careful case-by-case analysis.  Without loss of
generality, we may assume that $w \in W$ is an element of minimal
length in its conjugacy class in $W$.

\begin{Theorem} \label{thm:2} 
  Suppose $w \in W$ has minimal length in its
  conjugacy class in $W$ and let $J = J(w)$. Then $C_W(w)W_J = N_W(W_J)$.
\end{Theorem}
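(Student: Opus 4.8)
The plan is to prove the two inclusions $C_W(w)W_J \subseteq N_W(W_J)$ and $N_W(W_J) \subseteq C_W(w)W_J$ separately. Note first that $w \in W_J$, since each generator in a reduced word for $w$ lies in $J = J(w)$. The first inclusion is essentially formal; the second is the heart of the matter and rests on the no-fusion property of cuspidal classes, Theorem~\ref{3.2.11}.

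For $C_W(w)W_J \subseteq N_W(W_J)$, I would use that, because $w$ has minimal length in its conjugacy class, the standard parabolic subgroup $W_J$ with $J = J(w)$ is the \emph{smallest} parabolic subgroup of $W$ containing $w$; this is the fact recorded in the introduction, and it relies on Theorem~\ref{2.1.12} to know that the intersection of all parabolic subgroups containing $w$ is again parabolic. Now if $g \in C_W(w)$, then the conjugate $W_J^{\,g}$ is a parabolic subgroup containing $w^g = w$, hence $W_J^{\,g} \supseteq W_J$ by minimality; applying this to $g^{-1}$ as well gives $W_J^{\,g} = W_J$, so $g \in N_W(W_J)$. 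Thus $C_W(w) \subseteq N_W(W_J)$, and since $W_J$ is normal in $N_W(W_J)$ while $W_J \subseteq N_W(W_J)$, the set $C_W(w)W_J$ is a subgroup of $N_W(W_J)$ containing $W_J$. By the semidirect product decomposition $N_W(W_J) = W_J \rtimes N_J$ recalled in the introduction, with $N_J = \{x \in X_J : J^x = J\}$, we have $N_W(W_J) = \bigsqcup_{x \in N_J} W_J x$, so it now suffices to show that each coset $W_J x$ with $x \in N_J$ meets $C_W(w)$.

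So fix $x \in N_J$. Since $x$ normalizes $W_J$, the conjugate $w^{x^{-1}}$ again lies in $W_J$, and it is plainly $W$-conjugate to $w$. The crucial point is that, $w$ having minimal length in its $W$-conjugacy class, its conjugacy class $C_J$ in $W_J$ is a cuspidal class of $W_J$ (the remark following Theorem~\ref{3.2.11}), so Theorem~\ref{3.2.11} yields $C_J = C \cap W_J$, where $C$ denotes the conjugacy class of $w$ in $W$. Hence $w^{x^{-1}} \in C \cap W_J = C_J$, i.e.\ $w^{x^{-1}} = w^u$ for some $u \in W_J$. Then $w^{ux} = (w^u)^x = (w^{x^{-1}})^x = w$, so $ux \in C_W(w)$, and since $u \in W_J$ this element lies in $W_J x$. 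As the subgroup $C_W(w)W_J$ contains $W_J$ and meets every coset $W_J x$ ($x \in N_J$), it contains all of $N_W(W_J)$, giving $C_W(w)W_J = N_W(W_J)$.

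The only step with genuine content is the appeal to Theorem~\ref{3.2.11}: without the no-fusion property, the $W$-conjugate $w^{x^{-1}}$, which merely happens to land inside $W_J$, need not be $W_J$-conjugate to $w$, and the argument would collapse. Everything else --- the minimality of $W_{J(w)}$ among parabolics through a minimal-length $w$, the observation that $C_W(w)W_J$ is a subgroup, and the coset bookkeeping via the right transversal $X_J$ and the complement $N_J$ --- is routine, provided one keeps the convention $w^x = x^{-1}wx$ consistent so that ``$W_J x$ meets $C_W(w)$'' is the correct thing to verify.
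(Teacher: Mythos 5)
Your proof is correct, and its second half is essentially the paper's argument: for the inclusion $N_W(W_J) \subseteq C_W(w)W_J$ both you and the paper use the no-fusion property $C_J = C \cap W_J$ of Theorem~\ref{3.2.11} to replace a $W$-conjugation bringing $w$ back into $W_J$ by a $W_J$-conjugation; the paper applies this directly to an arbitrary $x \in N_W(W_J)$, while you phrase it coset-by-coset over $N_J$ and use that $C_W(w)W_J$ is a subgroup --- both versions are fine. Where you genuinely diverge is the inclusion $C_W(w) \subseteq N_W(W_J)$. The paper proves this self-containedly: it writes $y \in C_W(w)$ as $y = u x v^{-1}$ with $u,v \in W_J$ and $x \in X_{JJ}$, deduces $w^v \in C_J \cap C_J^x \subseteq W_J \cap W_J^x = W_{J \cap J^x}$ from Theorem~\ref{2.1.12}, and gets $J \cap J^x = J$ from cuspidality of $C_J$, so $x \in N_J$. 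You instead invoke the assertion from the introduction that $W_{J(w)}$ is the \emph{smallest} parabolic subgroup of $W$ containing the minimal-length element $w$, and conclude $W_J^{\,g} = W_J$ for $g \in C_W(w)$ by minimality. This is a clean and more conceptual route, but note that this minimality statement is only asserted, not proved, in the paper, and the justification you sketch for it (intersections of parabolics are parabolic, via Theorem~\ref{2.1.12}) is not by itself sufficient: one also needs that $w$ lies in no proper parabolic subgroup of $W_J$, which follows from the cuspidality of its $W_J$-class (\cite[Proposition 3.2.12]{GePf2000}, as quoted after Theorem~\ref{3.2.11}) together with the fact that a parabolic subgroup of $W$ contained in $W_J$ is a parabolic subgroup of $W_J$. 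In effect, the paper's double-coset computation \emph{is} a proof of that minimality, so the two arguments rest on the same pillars (Theorem~\ref{2.1.12} plus cuspidality); yours buys brevity and a reusable statement about parabolic closures at the cost of outsourcing that step, while the paper's stays self-contained within its stated lemmas.
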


Clearly, part (i) of Theorem~\ref{thm:1} follows from this result.

\begin{proof}
Denote by $C$ the conjugacy class of $w$ in $W$ and by $C_J$ its
conjugacy class in $W_J$, which is cuspidal in $W_J$.  
By Theorem~\ref{3.2.11}, $C_J = C \cap W_J$, which implies 
that for every $x\in N_W(W_J)$ there exists an element $u \in W_J$ with
$w^{x} = w^u$. So $xu^{-1}\in C_W(w)$, i.e. $x\in C_W(w)W_J$, and hence
$N_W(W_J)\subseteq C_W(w)W_J$.

Now it only remains to show that $C_W(w)\subseteq N_W(W_J)$.  Let $y
\in C_W(w)$ and write it as $y = u x v^{-1}$ for $u, v \in W_J$ and a
double coset representative $x \in X_{JJ}$.  From $w \in C_J \cap
C_J^y$ it then follows that $w^v \in C_J \cap C_J^x \subseteq W_J \cap
W_J^x = W_{J \cap J^x}$, by Theorem~\ref{2.1.12}, and since
$C_J$ is a cuspidal class in $W_J$, we must have $J \cap J^x = J$,
whence $x \in N_J \subseteq N_W(W_J)$ and thus $y = u x v^{-1} \in
N_W(W_J)$.
\end{proof}

The following additional results are of independent interest
and will be used in the proof of Theorem~\ref{thm:3} below.

\begin{Proposition} \label{prop:1}
  Suppose $w \in W$ has minimal length in its conjugacy class in $W$ 
  and let $J = J(w)$.  Then the following hold.
\begin{enumerate}
\item The conjugacy class of $w$ in $W$ is a disjoint union of
  conjugates of the conjugacy class of $w$ in $W_J$.
\item If $a \in C_W(w)$ and $x \in X_J$ are such that $C_{W_J}(w) a
  \subseteq W_J x$, then $x \in N_J$.
\item If $x \in X_J$ is such that $\ell(w^x) = \ell(w)$, then $J(w^x) =
  J^x$.
\item If $v \in W$ is such that $\ell(w^v) = \ell(w)$, then $J(w^x) =
  J^x$, where $v = u \cdot x$ with $u \in W_J$ and $x \in X_J$.
\end{enumerate}
\end{Proposition}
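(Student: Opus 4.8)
The plan is to prove the four parts of Proposition~\ref{prop:1} in the order (i), (iii), (iv), (ii), since several parts rely on earlier ones.

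For part~(i), the point is that the conjugacy class $C$ of $w$ in $W$ decomposes as a disjoint union of conjugates of the class $C_J$ of $w$ in $W_J$. I would argue as follows: every element of $C$ lies in some conjugate $W_J^g$ of $W_J$, and within $W_J^g$ its conjugacy class is $C_J^g$ by transporting Theorem~\ref{3.2.11} (applied to the cuspidal class $C_J$ of $W_J$, using that $C_J$ is cuspidal by \cite[Proposition 3.2.12]{GePf2000}). So $C = \bigcup_g C_J^g$. For disjointness, suppose $C_J^g \cap C_J^h \neq \emptyset$; writing $h g^{-1}$ through a double coset decomposition $W_J x W_J$ with $x \in X_{JJ}$, the argument from the proof of Theorem~\ref{thm:2} shows $x \in N_J$, so $W_J^g = W_J^h$ and then applying Theorem~\ref{3.2.11} inside that common conjugate forces $C_J^g = C_J^h$. (Equivalently: the conjugates $C_J^g$ are permuted transitively by $W$ and two coincide exactly when they share a point, since $C_J = C \cap W_J$.)

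For part~(iii): if $x \in X_J$ and $\ell(w^x) = \ell(w)$, I want $J(w^x) = J^x$. Since $w \in W_J$ and $x \in X_J$, Lemma~\ref{2.1.14} already gives $\ell(w^x) \ge \ell(w)$ for all such, and equality is the extremal case. One inclusion is automatic: $w = s_1 \cdots s_l$ reduced with $J = \{s_1,\dots,s_l\}$ gives $w^x = s_1^x \cdots s_l^x$, an expression of length $l$ in the reflections $s_i^x$; this already shows $w^x$ has length $\ell(w)$ written over conjugates of the $s_i$, but to conclude $J(w^x) = J^x$ I need this expression to be reduced \emph{in $W$} — which is exactly the hypothesis $\ell(w^x) = \ell(w)$, giving $J(w^x) \subseteq \{s_1^x,\dots,s_l^x\}$; but $\{s_1^x,\dots,s_l^x\}$ need not be a set of \emph{simple} reflections of $W$, so this is not literally $J^x$ as a subset of $S$. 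The correct route, I expect, is via Theorem~\ref{2.3.3}: since $w$ has minimal length in its class, so does $w^x$ (lengths are equal), and by Proposition~\ref{3.1.11} applied to the minimal-length representatives $w$ and $w^x$ we get $J(w^x) = J(w)^d = J^d$ for some double coset representative $d$; a length count pinning down $d$ against $x$ through the $X_J$ factorization, using Theorem~\ref{2.3.3} to strip descents of $x$ one at a time, should identify $J(w^x)$ with $J^x$. This is the step I expect to be the main obstacle: controlling the interaction between "being conjugate by a minimal coset representative" and "the generating set transforming correctly", and it is where Theorem~\ref{2.3.3} is designed to be used.

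Part~(iv) reduces immediately to part~(iii): write $v = u \cdot x$ with $u \in W_J$, $x \in X_J$; then $w^v = (w^u)^x$, and $w^u \in W_J$ is conjugate to $w$ in $W_J$, so $J(w^u) \subseteq J$ hence $w^u \in W_J$ with $\ell((w^u)^x) = \ell(w^v) = \ell(w) \le \ell(w^u)$ by minimality, forcing $\ell(w^u) = \ell(w)$ and $\ell((w^u)^x) = \ell(w^u)$; applying part~(iii) to $w^u$ in place of $w$ (note $w^u$ also has minimal length in the class, and $J(w^u) = J$ since it is conjugate in $W_J$ to the length-minimal $w$ with $J(w)=J$) gives $J(w^v) = J(w^u)^x = J^x$. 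Finally, for part~(ii): if $a \in C_W(w)$ and $x \in X_J$ with $C_{W_J}(w)\, a \subseteq W_J x$, then in particular $a = u x$ for some $u \in W_J$, so $w = w^a = (w^u)^x$, whence $\ell((w^u)^x) = \ell(w)$; arguing as in part~(iv), $w^u$ has minimal length with $J(w^u) = J$, and part~(iii) gives $J((w^u)^x) = J^x$, i.e. $J(w) = J^x$, so $J = J^x$ and therefore $x \in N_J$ by the definition $N_J = \{x \in X_J : J^x = J\}$.
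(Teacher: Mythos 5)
Your parts (i), (iv) and (ii) are essentially sound (and (iv), (ii) mirror the paper's reductions), but part (iii) — which your whole ordering (i), (iii), (iv), (ii) is built on — is not proved; it is only a plan, and you flag it yourself as ``the main obstacle.'' Concretely, you correctly discard the naive attempt (conjugating a reduced expression, since the $s_i^x$ need not be simple), but the replacement is a hope that Proposition~\ref{3.1.11} plus ``a length count pinning down $d$ against $x$,'' stripping descents of $x$ via Theorem~\ref{2.3.3}, ``should'' give $J(w^x)=J^x$. This route has a built-in problem: the hypothesis of Theorem~\ref{2.3.3} is that $x\in X_J$ already satisfies $J^x=K$ for some $K\subseteq S$, which is essentially the conclusion you are after, so you cannot apply it to $x$ a priori; and Proposition~\ref{3.1.11} only produces \emph{some} double coset representative conjugating $J(w)$ to $J(w^x)$, with no argument supplied that it coincides with (or controls) $x$. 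So as written, (iii) — and hence (ii), which you derive from it — has a genuine gap.

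The paper closes exactly this gap by running the dependencies the other way: (ii) is obtained first, as a direct consequence of the proof of Theorem~\ref{thm:2} ($C_W(w)\subseteq N_W(W_J)=W_J N_J$ by cuspidality of $C_J$, so the $X_J$-component of any centralizing element lies in $N_J$), and then (iii) is deduced from (ii): set $K=J(w^x)$, pick $y\in X_{KJ}$ with $K^y=J$ via Proposition~\ref{3.1.11}, note $w^{xy}=w^u$ for some $u\in W_J$ because $C\cap W_J=C_J$, observe that $u^{-1}xy$ centralizes $w^u$, write $u^{-1}xy=a\cdot z$ with $z\in X_J$, apply (ii) to get $z\in N_J$, and use $X_K=yX_J$ together with uniqueness of minimal coset representatives to conclude $x=zy^{-1}$ and $J^x=J(w^x)$. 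If you want to keep your ordering, you must supply an independent proof of (iii) of comparable substance; otherwise the fix is to prove (ii) first from Theorem~\ref{thm:2} (your own argument in the last paragraph shows you can get $a=ux$ and reduce to the structure of $N_W(W_J)$) and then follow the coset-representative argument above for (iii).
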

\begin{proof}
  (i) and (ii) follow from the proof of Theorem~\ref{thm:2}.

  (iii) Let $K = J(w^x)$.  By Proposition~\ref{3.1.11}, there exists
  an element $y \in X_{KJ}$ such that $K^y = J$.  Hence $w^{xy} \in
  W_J$ and $\ell(w^{xy}) = \ell(w^x) = \ell(w)$ and (since $C \cap W_J
  = C_J$)  $w^{xy} = w^u$ for some $u \in W_J$.  Moreover, $X_K =
  y X_J$. Hence $u^{-1}xy$ centralizes $w^u$, and if we write $u^{-1}xy
  = a \cdot z$ for $a \in W_J$ and $z \in X_J$ then, by (ii), $z \in
  N_J$.  It follows that $z y^{-1} \in X_{JK}$ is the unique
  minimal length representative of the coset $W_Jx$, hence $x = z
  y^{-1}$ and $J^x = J(w^x)$.
  
  (iv) We have $w^v = (w^u)^x$. Conjugation with $x$ does not decrease the length
  (Lemma~\ref{2.1.14}), so $\ell(w) = \ell(w^v) \geq \ell(w^u)$ and 
  therefore $\ell(w^u) = \ell(w)$. By (iii), with $w$ replaced by $w^u$, we have
  $J(w^v) = J(w^u)^x$, and $J(w^u) = J(w)$, which finishes the proof.
\end{proof}

\section{Complements.}
\label{sec:cases}

In this section we prove part (ii) of Theorem~\ref{thm:1} for each type of 
irreducible finite Coxeter group, case by case.
We start with the general observation that part (ii) of the theorem is
straightforward in the following situations.

\begin{Lemma}
  \label{la:cori} 
  Let $w \in W$ be an element of minimal length in its conjugacy class
  in $W$, and let $J = J(w)$.  If $w$ is cuspidal in $W$ or if
  $C_{W_J}(w) = W_J$ then $N_J$ is a complement of $C_{W_J}(w)$ in $C_W(w)$.
\end{Lemma}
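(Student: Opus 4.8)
The plan is to verify that $N_J$ satisfies the two defining properties of a complement to $C_{W_J}(w)$ inside $C_W(w)$: namely that $N_J \cap C_{W_J}(w) = \{1\}$ and that $C_W(w) = C_{W_J}(w) \cdot N_J$. For the first property, recall that $N_J \subseteq X_J$ consists of minimal length coset representatives, so $N_J \cap W_J = \{1\}$; since $C_{W_J}(w) = C_W(w) \cap W_J \subseteq W_J$, the intersection $N_J \cap C_{W_J}(w)$ is trivial regardless of which of the two hypotheses holds. The real content is the factorization $C_W(w) = C_{W_J}(w) \cdot N_J$, and here I expect to split into the two cases of the hypothesis.

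First I would handle the case $C_{W_J}(w) = W_J$. Then by Theorem~\ref{thm:2} we have $C_W(w) = C_W(w) W_J = N_W(W_J)$, and by the Howlett--Brink--Howlett result recalled in the introduction, $N_W(W_J) = W_J \rtimes N_J = C_{W_J}(w) \rtimes N_J$; so $N_J$ is literally the normalizer complement and we are done. Next I would handle the case where $w$ is cuspidal in $W$. Then $J = J(w) = S$, so $W_J = W$, and therefore $X_J = X_S = \{1\}$, forcing $N_J = \{1\}$. On the other hand $C_{W_J}(w) = C_W(w) \cap W = C_W(w)$, so $C_W(w) = C_{W_J}(w) \cdot N_J$ holds trivially and the intersection is trivial as well. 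In both cases $N_J$ is a complement of $C_{W_J}(w)$ in $C_W(w)$, which is the assertion.

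I expect the main subtlety — though it is more a matter of bookkeeping than of real difficulty — to be making sure the two hypotheses are genuinely sufficient and that no hidden assumption about $w$ having minimal length is being misused; the minimal length hypothesis is what licenses the appeal to Theorem~\ref{thm:2} (via Theorem~\ref{3.2.11}) and to the fact that $J(w)$ is well defined, so I would state explicitly where it enters. It is worth noting that this lemma is exactly the trivial part of Theorem~\ref{thm:1}(ii): it disposes of all cuspidal classes and all classes whose small parabolic centralizer is the whole parabolic, leaving the genuinely interesting work (including the non-compliant exceptions in type $D$) for the type-by-type analysis that follows.
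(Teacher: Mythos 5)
Your proof is correct, and its cuspidal case coincides with the paper's (there $W_J=W$, so $N_J$ is trivial and both quotients collapse). The difference lies in the case $C_{W_J}(w)=W_J$: the paper invokes the result of Pfeiffer and R\"ohrle (based on Richardson's characterization of involutions), namely that a minimal length element with $C_{W_J}(w)=W_J$ must be the central longest element $w_J$ and that then $C_W(w)=N_W(W_J)$; you instead observe that $C_{W_J}(w)=W_J$ means $W_J\subseteq C_W(w)$, so Theorem~\ref{thm:2} gives $C_W(w)=C_W(w)W_J=N_W(W_J)$ directly, after which Howlett's splitting $N_W(W_J)=W_J\rtimes N_J$ finishes the argument in both treatments. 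Your route is more self-contained, relying only on the paper's own Theorem~\ref{thm:2} (and correctly flagging that the minimal length hypothesis is what licenses it) rather than on the external citation; what it gives up is the extra structural information recorded in the paper's proof, namely the identification $w=w_J$, which is of independent interest but not needed for the lemma. Your explicit verification that $N_J\cap C_{W_J}(w)=\{1\}$, via $N_J\subseteq X_J$ and $X_J\cap W_J=\{1\}$, is a harmless elaboration of what the paper leaves implicit.
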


\begin{proof}
  If $w$ is cuspidal then $W_J = W$ and both quotients $N_W(W_J)/W_J$
  and $C_W(w)/C_{W_J}(w)$ are trivial.

  If $C_{W_J}(w) = W_J$ then 
$w = w_J$ and
$C_W(w) = N_W(W_J)$~\cite[Proposition 2.2]{PfeifferRoehrle2005}.
\end{proof}

Our general strategy in search of a centralizer complement for $w$ will
be to identify a complement $M$ of $W_J$ in its normalizer that
centralizes $w$.  More precisely, we have the following consequence
of Theorem~\ref{thm:2}.

\begin{Proposition}
  \label{pro:M}
  Let $w \in W$ be of minimal length in its conjugacy class, let $J =
  J(w)$ and suppose that the normalizer complement $N_J$ is generated
  by elements $x_1, \dots, x_r$.  Let $u_1, \dots, u_r \in W_J$ be
  such that $u_i x_i \in C_W(w)$, $i = 1, \dots, r$, and set $M =
  \langle u_1 x_1, \dots, u_r x_r\rangle$.  Then $M$ is a complement
  of $C_{W_J}(w)$ in $C_W(w)$ provided that $M \cap W_J = 1$.
\end{Proposition}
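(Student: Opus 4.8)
The plan is to prove Proposition~\ref{pro:M} by verifying the defining properties of an internal semidirect product: that $M$ and $C_{W_J}(w)$ generate $C_W(w)$, that their intersection is trivial (which is hypothesised), and that $C_{W_J}(w)$ is normal in $C_W(w)$ (which we already know from part~(i) of Theorem~\ref{thm:1}, equivalently from Theorem~\ref{thm:2}). So the real content is the generation statement $C_W(w) = C_{W_J}(w) \cdot M$, and here the normalizer complement structure does the work.

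First I would recall from Howlett and Brink--Howlett that $N_W(W_J) = W_J \rtimes N_J$, so that $N_J \cong N_W(W_J)/W_J$ and $N_J$ maps isomorphically onto this quotient. Since by hypothesis $N_J = \langle x_1, \dots, x_r\rangle$, the images of $x_1, \dots, x_r$ generate $N_W(W_J)/W_J$. Now each $u_i x_i \in C_W(w) \subseteq N_W(W_J)$ (the containment $C_W(w) \subseteq N_W(W_J)$ being exactly Theorem~\ref{thm:2}), and modulo $W_J$ the element $u_i x_i$ has the same image as $x_i$ because $u_i \in W_J$. Therefore $M = \langle u_1 x_1, \dots, u_r x_r \rangle$ surjects onto $N_W(W_J)/W_J$ under the quotient map $N_W(W_J) \to N_W(W_J)/W_J$. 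Restricting this quotient map to $C_W(w)$ and using Theorem~\ref{thm:2} (which gives $C_W(w)W_J = N_W(W_J)$, so the restricted map is still surjective with kernel $C_W(w) \cap W_J = C_{W_J}(w)$), we conclude that the composite $M \hookrightarrow C_W(w) \twoheadrightarrow C_W(w)/C_{W_J}(w)$ is surjective. Hence $C_W(w) = C_{W_J}(w) \cdot M$.

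Next I would combine this with the hypothesis $M \cap W_J = 1$, which immediately gives $M \cap C_{W_J}(w) = M \cap C_W(w) \cap W_J \subseteq M \cap W_J = 1$. Together with normality of $C_{W_J}(w)$ in $C_W(w)$ and the generation statement above, the three conditions for $C_W(w) = C_{W_J}(w) \rtimes M$ are met, so $M$ is a complement of $C_{W_J}(w)$ in $C_W(w)$, and moreover the composite map exhibits the isomorphism $M \cong C_W(w)/C_{W_J}(w) \cong N_W(W_J)/W_J$.

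The argument is essentially bookkeeping with the quotient map, so I do not expect a genuine obstacle; the one point requiring a little care is making sure the quotient map $C_W(w) \to C_W(w)/C_{W_J}(w)$ is correctly identified with (the restriction of) $N_W(W_J) \to N_W(W_J)/W_J$ — this uses both that $C_{W_J}(w) = C_W(w) \cap W_J$ and that $C_W(w)W_J = N_W(W_J)$, i.e. the full strength of Theorem~\ref{thm:2}. Once that identification is in place, surjectivity of $M$ onto the quotient follows from $N_J$ generating $N_W(W_J)/W_J$ and the $u_i$ lying in the kernel $W_J$, and triviality of the intersection is handed to us. So the proof is short; its only subtlety is that it silently relies on the Howlett / Brink--Howlett semidirect product decomposition to know that $N_J$ does surject onto $N_W(W_J)/W_J$ in the first place.
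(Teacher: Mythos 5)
Your argument is correct and follows essentially the same route as the paper: you show that $M$ covers $N_W(W_J)$ modulo $W_J$ (so, with $M \cap W_J = 1$, it is a complement of $W_J$ in the normalizer), lies in $C_W(w)$, and then transfer to the centralizer via Theorem~\ref{thm:2}, i.e.\ $C_W(w)W_J = N_W(W_J)$ and $C_{W_J}(w) = C_W(w) \cap W_J$. The paper's proof is just a terser version of this; your quotient-map bookkeeping merely makes explicit the step the paper summarizes as ``From Theorem~\ref{thm:2} it then follows.''
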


\begin{proof}
  Clearly, $W_J M = W_J N_J = N_W(W_J)$.  From $M \cap W_J = 1$ it
  then follows that $M$ is a complement of $W_J$ in its normalizer.
  Moreover, $M$ is a subgroup of $C_W(w)$ since each of its generators
  centralizes~$w$.  From Theorem~\ref{thm:2} it then follows that $M$
  is a complement of $C_{W_J}(w)$ in $C_W(w)$.
\end{proof}

\subsection{Type \texorpdfstring{$A$}{A}.} 
\label{sec:a}

Let $\lambda = (1^{a_1}, 2^{a_2}, \dots, n^{a_n}) \partition n$,
let $w_{\lambda}$ be as in (\ref{eq:wa}) and let $J = J(w_{\lambda})$.
Then $W_J$ is a direct product
\begin{align*}
  W_J = \Sym_1^{a_1} \times \Sym_2^{a_2} \times \dots \times \Sym_n^{a_n}
\end{align*}
of symmetric groups, and its normalizer 
\begin{align*}
  N_W(W_J) = \Sym_1 \wr \Sym_{a_1} \times 
\Sym_2 \wr \Sym_{a_2} \times 
\dots \times \Sym_n \wr \Sym_{a_n}
\end{align*}
is a direct product of wreath products of symmetric with symmetric groups.  In a similar way,
the centralizer
\begin{align*}
  C_W(w_{\lambda}) = C_1 \wr \Sym_{a_1} \times C_2 \wr \Sym_{a_2} \times \dots \times C_n \wr \Sym_{a_n}
\end{align*}
is a direct product of wreath products of cyclic with symmetric groups,
and the centralizer
\begin{align*}
  C_{W_J}(w_{\lambda}) = C_1^{a_1} \times C_2^{a_2} \times \dots \times C_n^{a_n}.
\end{align*}
is a direct product of cyclic groups.  Clearly, the quotients
$N_W(W_J)/W_J$ and $C_W(w_{\lambda})/C_{W_J}(w_{\lambda})$ are both
isomorphic to $\Sym_{a_1} \times \Sym_{a_2} \times \dots \times
\Sym_{a_n}$.  In order to show that the particular normalizer
complement $N_J$ is also a complement of $C_{W_J}(w_{\lambda})$ in
$C_W(w_{\lambda})$, we introduce some notation.  Let us define as
\begin{align} \label{eq:sss}
s(o,m)
= (s_{o+1} s_{o+2} \dotsm s_{o+2m-1})^m
= (o+1,o+2, \dots, o+2m)^m
\end{align}
the permutation that swaps, after an offset $o$, two adjacent blocks
of $m$ points $\{o+1, \dots, o+m\}$ and $\{o+m+1, \dots, o+2m\}$.
For example, $s(2,3) = (s_3 s_4 s_5 s_6 s_7)^3 = (3, 4, 5, 6, 7, 8)^3 = (3,6)(4,7)(5,8)$.  And $s_i = s(i-1, 1)$.
Then 
\begin{align*}
  N_J = \Sym_{a_1} \times \Sym_{a_2} \times \dots  \times \Sym_{a_n}
\end{align*}
is a direct product of symmetric groups
$\Sym_{a_m}$, with Coxeter generators 
$s(o_m, m)$,    
$s(o_m{+}m,m)$,    \dots,
$s(o_m{+}(a_m{-}2)m, m)$,
and offsets 
\begin{align} \label{eq:om-a}
  o_m = a_1 + 2 a_2 + \dots + (m-1) a_{m-1},
\end{align}
for those $m \in \{ 1, \dots, n\}$ with $a_m > 0$.  

\begin{Proposition} 
  \label{pro:a} 
  Let $\lambda \partition n$, let $w_{\lambda}$ be the permutation
  with cycle structure $\lambda$ from (\ref{eq:wa})  and let $J =
  J(w_{\lambda})$ be the corresponding subset of $S$.
Then $N_J$ is a complement
  of $C_{W_J}(w_{\lambda})$ in $C_W(w_{\lambda})$.
\end{Proposition}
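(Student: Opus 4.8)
The plan is to verify the hypotheses of Proposition~\ref{pro:M} with the trivial choice $u_1 = \dots = u_r = 1$. That is, I would show that each of the block-swap generators $s(o_m + jm, m)$ (for $0 \le j \le a_m - 2$, over those $m$ with $a_m > 0$) of $N_J$ already centralizes $w_{\lambda}$; then $M = N_J$, the intersection $M \cap W_J = N_J \cap W_J$ is trivial because $N_J$ is the normalizer complement of Howlett, and Proposition~\ref{pro:M} gives at once that $N_J$ is a complement of $C_{W_J}(w_{\lambda})$ in $C_W(w_{\lambda})$.

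The one computation to carry out is that each generator $\sigma = s(o,m)$ commutes with $w_{\lambda}$. By (\ref{eq:sss}), $\sigma$ is the $m$-th power of the $2m$-cycle on $\{o+1, \dots, o+2m\}$, hence the product of the transpositions $(o+i,\ o+m+i)$ for $i = 1, \dots, m$, and it fixes every point outside $\{o+1, \dots, o+2m\}$. With the offsets (\ref{eq:om-a}), for $o = o_m + jm$ the points $\{o+1, \dots, o+2m\}$ are exactly the supports of two consecutive length-$m$ cycles $c_1 = (o+1, \dots, o+m)$ and $c_2 = (o+m+1, \dots, o+2m)$ of $w_{\lambda}$. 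Since $\sigma$ fixes all other points, it commutes with every remaining cycle of $w_{\lambda}$; and conjugating $c_1$ by $\sigma$ sends it to $(\sigma(o+1), \dots, \sigma(o+m)) = c_2$, and $c_2$ to $c_1$, with no internal rotation because $\sigma$ preserves the position of a point within its block. Thus conjugation by $\sigma$ merely interchanges the two factors $c_1$ and $c_2$ of $w_{\lambda}$ and leaves $w_{\lambda}$ unchanged, so $\sigma \in C_W(w_{\lambda})$, and therefore $N_J \subseteq C_W(w_{\lambda})$.

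With every generator of $N_J$ in $C_W(w_{\lambda})$, the conclusion follows as described above. Alternatively, once $N_J \subseteq C_W(w_{\lambda})$ is known one may finish by a comparison of orders: $N_J \cap C_{W_J}(w_{\lambda}) \le N_J \cap W_J = 1$, the subgroup $C_{W_J}(w_{\lambda})$ is normal in $C_W(w_{\lambda})$ by Theorem~\ref{thm:1}(i), and $|C_{W_J}(w_{\lambda})| \cdot |N_J| = \prod_m m^{a_m} \cdot \prod_m a_m! = |C_W(w_{\lambda})|$, so $N_J \cdot C_{W_J}(w_{\lambda}) = C_W(w_{\lambda})$. I do not expect a genuine obstacle in type $A$; the only point that needs care is the cycle bookkeeping just indicated, namely confirming that the block swap $s(o_m + jm, m)$ carries one length-$m$ cycle of $w_{\lambda}$ exactly onto the adjacent one, and not onto a nontrivial rotation of it.
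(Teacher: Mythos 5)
Your proof is correct and follows essentially the same route as the paper: you show that the block-swap generators of $N_J$ centralize $w_{\lambda}$ and then invoke the splitting of the normalizer (Proposition~\ref{pro:M}, which rests on Theorem~\ref{thm:2}) together with $N_J \cap W_J = 1$. The only cosmetic difference is that the paper first reduces componentwise to the rectangular case $\lambda = (m^a)$ and declares the centralizing property ``clear,'' whereas you carry out the cycle bookkeeping for the generators directly, which is a fine substitute.
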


\begin{proof}
  It suffices to consider the case $\lambda = (m^a)$ since all of
  $W_J$, $N_W(W_J)$, $C_W(w_{\lambda})$, $C_{W_J}(w_{\lambda})$, and
  $N_J$ are subgroups of the direct product
  \begin{align*}
    \Sym_{1a_1} \times \Sym_{2a_2} \times \dots \times \Sym_{n a_n}
  \end{align*}
  inside $\Sym_n$ and one can argue componentwise.

  If $\lambda = (m^a)$, then $N_J$ is isomorphic to $\Sym_a$, with
  $a-1$ Coxeter generators $s(o,m)$, $s(m,m)$, \dots, $s((a{-}2)m,m)$,
  permuting the blocks of $m$ points
  \begin{center}
    $\{1, \dots, m\}$, $\{m{+}1, \dots, 2m\}$, \ldots,
    $\{(a{-}1)m{+}1,\dots, am\}$.
  \end{center}
  Clearly 
  \begin{align*}
    w_{\lambda} = (1, \dots, m)(m{+}1, \dots, 2m) \dotsm
    ((a{-}1)m{+}1,\dots, am)
  \end{align*}
  is centralized by $N_J$.
  The claim now follows from Theorem~\ref{thm:2}.
\end{proof}

\subsection{Type \texorpdfstring{$B$}{B}.} 
\label{sec:b}

Let $\lambda \dpartition n$ with $\lambda^+ = (1^{a_1}, 2^{a_2}, \dots, n^{a_n})$ and $\lambda^- = (1^{b_1}, 2^{b_2}, \dots, n^{b_n})$,
let $w_{\lambda}$ be as in Section~\ref{sec:intro-b},
and let $J = J(w_{\lambda})$ be the corresponding subset of $S$.
Then $W_J$ is a direct product
\begin{align*}
  W_J = W(B_{\Size{\lambda^-}}) \times \Sym_1^{a_1} \times \Sym_2^{a_2} \times \dots \times \Sym_n^{a_n}
\end{align*}
and its normalizer
\begin{align*}
  N_W(W_J) = W(B_{\Size{\lambda^-}}) \times \Sym_1 \wr W(B_{a_1})
\times \Sym_2 \wr W(B_{a_2}) \times \dots
\times \Sym_n \wr W(B_{a_n})
\end{align*}
is a direct product of $W(B_{\Size{\lambda^-}})$
with wreath products of symmetric groups and
Coxeter groups of type $B$.
In a similar way, the centralizer
\begin{align*}
  C_W(w_{\lambda}) = 
C_{W(B_{\Size{\lambda^-}})}(w_{\lambda})
\times C_1 \wr W(B_{a_1}) \times C_2 \wr W(B_{a_2}) \times \dots \times C_n \wr
  W(B_{a_n})
\end{align*}
is a direct product of $C_{W(B_{\Size{\lambda^-}})}(w_{\lambda})$ and  wreath products, and the centralizer
\begin{align*}
C_{W_J}(w_{\lambda}) = 
  C_{W(B_{\Size{\lambda^-}})}(w_{\lambda}) \times 
  C_1^{a_1} \times C_2^{a_2} \times \dots \times C_n^{a_n}
\end{align*}
is a direct product of $C_{W(B_{\Size{\lambda^-}})}(w_{\lambda})$
and cyclic groups.
Clearly, the quotients
$N_W(W_J)/W_J$ and $C_W(w_{\lambda})/C_{W_J}(w_{\lambda})$ are both
isomorphic to $W(B_{a_1}) \times W(B_{a_2}) \times \dots \times
W(B_{a_n})$.  In order to show that a variant of the particular normalizer
complement $N_J$ is a complement of $C_{W_J}(w_{\lambda})$ in
$C_W(w_{\lambda})$, we introduce some more notation.

Denote by $r(o,m)$ the  permutation
defined by
\begin{align*}
  x.r(o,m) =
  \begin{cases}
    2o+m+1-x, & \text{if } o+1 \leq x \leq o+m, \\
    x, & \text{otherwise.}
  \end{cases}
\end{align*}
In this way, $r(o,m) = (o+1,o+m)(o{+}2, o{+}m{-}1) \dotsm $ reverses the range
$\{o+1, \dots, o+m\}$ and thus is the longest element of the
symmetric group $\Sym_{\{o+1, \dots, o+m\}}$
with Coxeter generators $s_{o+1}, \dots, s_{o+m-1}$.
For example, $r(2,5) = (3,7)(4,6)$.

Moreover, denote 
\begin{align*}
  t(o, m) = (o{+}1)^- (o{+}2)^- \dotsm (o{+}m)^-,
\end{align*}
which acts as $-1$ on the
points $\{o+1, o+2, \dots, o+m\}$ and as identity everywhere else.

If $\lambda^+ = (m^a)$ and $\lambda^- = \emptyset$, then $W_J$ is a
direct product of $a$ copies of $\Sym_m$ and $N_J$ is isomorphic to
$W(B_a)$, with Coxeter generators 
\begin{center}
  $r(0,m)\, t(0,m)$ and $s(0,m)$, $s(m,m)$, \dots, $s((a{-}2)m,m)$.
\end{center}
In general, if $\lambda^+ = (1^{a_1}, 2^{a_2}, \dots, n^{a_n})$, then
$W_J$ is a direct product of $W(B_{\Size{\lambda^-}})$ and direct products
of isomorphic symmetric groups $\Sym_m$ and $N_J$ is a direct product
of groups $W(B_{a_m})$, with Coxeter generators 
\begin{center}
  $r(o_m,m)\, t(o_m, m)$ and $s(o_m,m)$,
$s(o_m + m,m)$, \dots, $s(o_m + (a_m{-}2)m,m)$ 
\end{center}
and offsets 
\begin{align} \label{eq:om-b}
  o_m = \Size{\lambda^-} + a_1 + 2 a_2 + \dots + (m-1)a_{m-1},
\end{align}
for those $m \in \{ 1, \dots, n\}$ with $a_m > 0$.  Unfortunately,
this group $N_J$ usually does not centralize $w_{\lambda}$.  However,
if we define a group $N_{\lambda}$ as the subgroup of $W$ generated by
the same elements as $N_J$, with $t(o_m,m)$ in place of $r(o_m,m)\,
t(o_m,m)$, then $N_{\lambda}$ is a centralizing complement.

\begin{Proposition} 
  \label{pro:b}
  Let $\lambda \dpartition n$, let $w_{\lambda}$ be
  as in Section~\ref{sec:intro-b}, and let $J = J(w_{\lambda})$ be the corresponding subset
  of $S$.  Then
\begin{align*}
  N_{\lambda} =
\Span{
t(o_m, m), \,
s(o_m{+}km, m)
\mid k=0,\dots,a_m{-}2,\,
 m = 1, \dots, n,\,
a_m > 0}
\end{align*}
is a complement of
  $C_{W_J}(w_{\lambda})$ in $C_W(w_{\lambda})$.
\end{Proposition}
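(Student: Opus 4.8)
I would apply Proposition~\ref{pro:M} with $M = N_\lambda$. List the Coxeter generators of $N_J$ given above as $x_1, \dots, x_r$; these are the elements $r(o_m, m)\, t(o_m, m)$, one for each $m$ with $a_m > 0$, together with the block swaps $s(o_m{+}km, m)$, $k = 0, \dots, a_m{-}2$. For a generator $x_i = r(o_m,m)\, t(o_m,m)$ I would take $u_i = r(o_m,m)$, which lies in $W_J$, being the longest element of the parabolic factor $\Sym_{\{o_m+1, \dots, o_m+m\}}$ of $W_J$; since $r(o_m,m)$ is an involution, $u_i x_i = t(o_m, m)$. For a generator $x_i = s(o_m{+}km, m)$ I would take $u_i = 1$, so $u_i x_i = x_i$. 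Then $\Span{u_1 x_1, \dots, u_r x_r} = N_\lambda$, and by Proposition~\ref{pro:M} it remains only to check that (a) each generator of $N_\lambda$ centralizes $w_\lambda$, and (b) $N_\lambda \cap W_J = 1$.

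For (a), a short computation with signed cycles should do it. Since $o_m \geq \Size{\lambda^-}$, none of the points moved by $t(o_m,m)$ or $s(o_m{+}km,m)$ lies in a negative cycle of $w_\lambda$. The swap $s(o_m{+}km, m)$ interchanges the $(k{+}1)$-st and $(k{+}2)$-nd positive $m$-cycles of $w_\lambda$ and fixes all other cycles, so it centralizes $w_\lambda$; and $t(o_m, m)$ changes the sign of every entry of the first positive $m$-cycle $(o_m{+}1, \dots, o_m{+}m)$ and fixes every other point, and since negating all the entries of a cycle leaves the signed permutation unchanged, $t(o_m,m)$ centralizes $w_\lambda$ as well.

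Claim (b) is the part I expect to be the real obstacle: although $N_\lambda$ is described only through generators, one must see that this description is \emph{faithful} rather than an over-count. The plan here is to observe that each generator $t(o_m, m)$, $s(o_m{+}km, m)$ fixes the points $1, \dots, \Size{\lambda^-}$ and permutes the blocks $B_{m,k} = \{o_m{+}(k{-}1)m{+}1, \dots, o_m{+}km\}$ among blocks of the same size $m$, possibly reversing the sign of a block, in each case carrying a block onto (plus or minus) a block of equal size by the \emph{unique} increasing bijection of absolute values. These properties are preserved under composition and inversion, hence hold for every element of $N_\lambda$. Now if $g \in N_\lambda \cap W_J$, then writing $W_J = W(B_{\Size{\lambda^-}}) \times \prod_m \Sym_m^{a_m}$ with $\Sym_m^{a_m} = \prod_k \Sym_{B_{m,k}}$ acting on pairwise disjoint point sets, $g$ must map each $B_{m,k}$ to itself with no sign change; combined with the above, $g$ then restricts on each $B_{m,k}$ to the increasing bijection of $B_{m,k}$ with itself, that is to the identity, and $g$ fixes $1, \dots, \Size{\lambda^-}$, so $g = 1$.

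Once (a) and (b) are established, Proposition~\ref{pro:M} (which also supplies $W_J N_\lambda = N_W(W_J)$) immediately yields that $N_\lambda$ is a complement of $C_{W_J}(w_\lambda)$ in $C_W(w_\lambda)$. As noted, the one delicate point is (b): ruling out any accidental coincidence among products of the $t(o_m,m)$ and $s(o_m{+}km,m)$, for which the rigidity of the increasing bijections between equal-size blocks is exactly the right tool; everything else is bookkeeping with the direct-product structure.
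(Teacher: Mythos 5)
Your proposal is correct and follows essentially the same route as the paper: apply Proposition~\ref{pro:M} to the generators of $N_J$, replacing $r(o_m,m)\,t(o_m,m)$ by $t(o_m,m)$ via the $W_J$-element $r(o_m,m)$, and check that the generators centralize $w_\lambda$ and that $N_\lambda\cap W_J=1$. The only difference is that you spell out in detail (the signed-cycle computation and the block-rigidity argument for $N_\lambda\cap W_J=1$) what the paper asserts in one line as ``$N_\lambda$ is a complement of $W_J$ in its normalizer,'' which is a welcome but not substantively different elaboration.
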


\begin{proof}
  Clearly, $N_{\lambda}$ centralizes $w_{\lambda}$ since its
  generators $t(o_m,m)$ and $s(o_m{+}km, m)$ do.  The statement now
  follows with Proposition~\ref{pro:M} from the fact that
  $N_{\lambda}$ is a complement of $W_J$ in its normalizer in~$W$.
\end{proof}

\subsection{Type \texorpdfstring{$D$}{D}.}  
\label{sec:d}

The case of Coxeter groups of type $D_n$ is best dealt with by
comparing it to the situation in type $B_n$.  Throughout this section,
we assume $n \geq 4$, denote by $W$ the Coxeter group of type $B_n$
with Coxeter generators $S$, as described in
Section~\ref{sec:intro-b}, and by $W^+$ the Coxeter group of type
$D_n$ with Coxeter generators $S^+$, consisting of the signed
permutations with an even number of negative cycles as described in
Section~\ref{sec:intro-d}.

The following properties are easy to establish and we leave
their proofs to the reader.

\begin{Lemma}
\label{la:d}
  Let $w \in W$ be an element of cycle type $\lambda = (\lambda^+, \lambda^-)$
such that $\lambda^-$ has an even number of parts and that
$w$ has minimal length in its conjugacy class in $W$. 
Also let $J = J(w)$.  Then the following hold.
\begin{enumerate}
\item $w$ has minimal length in its class in $W^+$.
\item If $\lambda^- = \emptyset$ and $\lambda^+$ is even then
  $C_{W^+}(w) = C_W(w)$, otherwise $C_{W^+}(w)$ has index $2$ in
  $C_W(w)$.
\item If $J^+ = S^+ \cap W_J$ then $W^+_{J^+}$ is the smallest
  parabolic subgroup of $W^+$ containing $w$.
\item 
If $\lambda^- = \emptyset$ then $J^+ = J$, otherwise
$W^+_{J^+} = W_J \cap W^+$ is a subgroup of index $2$
in $W_J$ 
\item $C_{W^+_{J^+}}(w) = C_{W_J}(w) \cap W^+$ is a subgroup of index
$2$ in  $C_{W_J}(w)$ unless $\lambda^- = \emptyset$.
\item If $\lambda^+$ is not even and $\lambda^- = \emptyset$ then
  $N^+_{J^+} = N_J \cap W^+$ is a subgroup of index $2$ in $N_J$,
  otherwise $N_J \cong N^+_{J^+}$.
\end{enumerate}
\end{Lemma}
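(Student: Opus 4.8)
For parts~(ii)--(vi) the assertions are invariant under $W$-conjugation of $w$ (using that $W^+$ is normal in $W$), so the plan is to take $w=w_\lambda$, the standard minimal-length representative of Section~\ref{sec:intro-b}, with $J=J(w_\lambda)$ standard and $W_J=W(B_{\Size{\lambda^-}})\times\prod_m\Sym_m^{a_m}$ in the notation of Section~\ref{sec:b}; part~(i) refers to the length function of $W^+$ and is handled separately. Everything then runs through one elementary observation: a signed permutation lies in $W^+=W(D_n)$ if and only if it has an even number of negative cycles. Since $W^+$ has index~$2$ in $W$, a subgroup $H\le W$ satisfies $H\cap W^+=H$ exactly when no element of $H$ has an odd number of negative cycles, and otherwise $[H:H\cap W^+]=2$; every index-$2$ assertion in the lemma is an instance of this.

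I would first settle the structural parts (iii) and (iv). The negative cycles of an element of $W_J$ all come from its first factor, so $W_J\cap W^+=W(D_{\Size{\lambda^-}})\times\prod_m\Sym_m^{a_m}$; when $\lambda^-\ne\emptyset$ (so $\Size{\lambda^-}\ge2$) this is generated by $u,s_1,\dots,s_{\Size{\lambda^-}-1}$ together with the $\Sym_m$-generators occurring in $J$, and these are precisely the elements of $S^+$ lying in $W_J$, so $W^+_{J^+}=W_J\cap W^+$ has index~$2$ in $W_J$; when $\lambda^-=\emptyset$ one has $J\subseteq S\cap S^+$ and $W^+_{J^+}=W_J\cap W^+=W_J$, i.e., $J^+=J$. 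This is~(iv), and since $w$ has an even number of negative cycles, $w\in W^+_{J^+}$; as $W^+_{J^+}=W(D_{\Size{\lambda^-}})\times\prod_m\Sym_m^{a_m}$ is exactly the smallest parabolic of $W^+$ containing $w$ by the description in Section~\ref{sec:intro-d}, this gives~(iii). For~(i) I would invoke the classification of minimal-length class representatives in types $B$ and $D$ in \cite[\S3.4]{GePf2000}: the representatives there coincide, so a minimal-length element of a $W$-class also has minimal length in its $W^+$-class; in the split case $\lambda^-=\emptyset$, $\lambda^+$ even one checks in addition that it is $W^+$-conjugate to $w_\lambda$ rather than $w_\lambda'$.

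For~(ii) and~(v) I would combine Proposition~\ref{pro:b}, which gives $C_W(w)=C_{W_J}(w)\cdot N_\lambda$, with the dichotomy above. The generators $s(\cdot,m)$ and the positive-cycle generators are sign-free, the block sign-change $t(o_m,m)$ has $m$ negative cycles, and $C_{W(B_{\Size{\lambda^-}})}(w)$ contains a single negative cycle of $w$ as soon as $\lambda^-\ne\emptyset$. Hence $C_W(w)\subseteq W^+$, i.e., $C_{W^+}(w)=C_W(w)$, holds precisely when $\lambda^-=\emptyset$ and every $m$ with $a_m>0$ is even, i.e., when $\lambda^+$ is even; otherwise the index is~$2$, which is~(ii). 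The same count inside $W^+_{J^+}=W_J\cap W^+$ gives $C_{W^+_{J^+}}(w)=C_{W_J}(w)\cap W^+$, of index~$2$ in $C_{W_J}(w)$ unless $\lambda^-=\emptyset$, which is~(v).

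Part~(vi) is where I expect the real work. I would first show $N_{W^+}(W^+_{J^+})=N_W(W_J)\cap W^+$: the group $N_W(W_J)=W(B_{\Size{\lambda^-}})\times\prod_m(\Sym_m\wr W(B_{a_m}))$ normalizes $W_J\cap W^+$ because it normalizes the normal subgroup $W^+$, and conversely $N_W(W_J\cap W^+)=N_W(W_J)$ because replacing the factor $W(D_{\Size{\lambda^-}})$ by $W(B_{\Size{\lambda^-}})$ does not enlarge the normalizer. Then the inclusion $N_{W^+}(W^+_{J^+})\hookrightarrow N_W(W_J)$ followed by the projection onto $N_W(W_J)/W_J\cong\prod_m W(B_{a_m})\cong N_J$ has kernel $N_W(W_J)\cap W^+\cap W_J=W^+_{J^+}$, hence embeds $N_{W^+}(W^+_{J^+})/W^+_{J^+}$ into $N_J$; an order count then finishes it, since $N_W(W_J)\cap W^+$ has index~$2$ in $N_W(W_J)$ exactly when $\lambda^-\ne\emptyset$ (witnessed by $t_1$) or $\lambda^+$ is not even (witnessed by a flip $r(o_m,m)\,t(o_m,m)$ with $m$ odd), whereas $W^+_{J^+}$ has index~$2$ in $W_J$ exactly when $\lambda^-\ne\emptyset$. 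So the embedding is onto whenever $\lambda^-\ne\emptyset$ or $\lambda^+$ is even, giving $N^+_{J^+}\cong N_J$, and when $\lambda^-=\emptyset$ with $\lambda^+$ not even it identifies $N^+_{J^+}$ with the index-$2$ subgroup $N_J\cap W^+$ (a normalizer complement in $W^+$ since then $W_J\subseteq W^+$). The delicate point throughout~(vi) is exactly this bookkeeping of which of the two index-$2$ reductions occur together: the naive alternative of replacing the generators of $N_J$ that lie outside $W^+$ by nearby elements of $W^+$ fails, since that changes their orders. A secondary point needing care is the split case of part~(i).
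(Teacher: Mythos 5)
The paper offers no proof of this lemma to compare against --- it is stated with ``the following properties are easy to establish and we leave their proofs to the reader'' --- so I can only judge your argument on its own terms. Parts (ii)--(v) are essentially sound: the dichotomy ``a subgroup of $W$ either lies in $W^+$ or meets it in index $2$'', the identification $W_J\cap W^+=W(D_{\Size{\lambda^-}})\times\prod_m\Sym_m^{a_m}=\Span{S^+\cap W_J}$ (using $\Size{\lambda^-}\geq 2$ when $\lambda^-\neq\emptyset$), and the witnesses (a single negative cycle of $w$, resp.\ $t(o_m,m)$ with $m$ odd) do the job, and using Proposition~\ref{pro:b} here is not circular. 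One gloss to repair: ``the assertions are invariant under $W$-conjugation'' is not literally true because $J(w)$ is not equivariant under arbitrary conjugation; to pass from a general minimal-length $w$ to $w_\lambda$ you need Proposition~\ref{3.1.11} and Proposition~\ref{prop:1}(iv) (conjugate by $u\in W_J$ and then by an $x$ with $J^x=J(w')$), plus the observation that any subset of $S$ conjugate to $J(w_\lambda)$ contains the same $B_{\Size{\lambda^-}}$-string, since that is the only subdiagram of its type.

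The genuine gaps are in (i) and (vi). For (i), the inference ``the standard representatives in \cite[\S 3.4]{GePf2000} coincide, so a minimal-length element of a $W$-class has minimal length in its $W^+$-class'' does not follow: statement (i) is about \emph{every} element of minimal $\ell_B$-length, not just $w_\lambda$, and the two length functions differ. The missing ingredient is a comparison such as $\ell_B(x)=\ell_D(x)+\mathrm{neg}(x)$, where $\mathrm{neg}(x)=\#\{i>0: x(i)<0\}$, together with the cycle count $\mathrm{neg}(x)\geq(\text{number of parts of }\lambda^-)$ for every $x$ in the class, with equality for $w_\lambda$; then $\ell_B(w)=\ell_B(w_\lambda)$ forces $\ell_D(w)\leq\ell_D(w_\lambda)$, which is the minimum of $\ell_D$ on the $W^+$-class. (The split case is \emph{not} where the care is needed: $w_\lambda$ and $w_\lambda'$ are exchanged by the diagram automorphism, hence have equal $\ell_D$-length.) In (vi), the two load-bearing steps are asserted rather than proved: first, $N_{W^+}(W^+_{J^+})\subseteq N_W(W_J)$ --- your phrase ``replacing $W(D_{\Size{\lambda^-}})$ by $W(B_{\Size{\lambda^-}})$ does not enlarge the normalizer'' restates the claim; a proof could note that a normalizing element permutes the orbits of $W^+_{J^+}$ on $\pm[n]$ and the only orbit closed under negation is $\{\pm1,\dots,\pm\Size{\lambda^-}\}$, so the block decomposition is preserved and the normalizer is again $W(B_{\Size{\lambda^-}})\times\prod_m\Sym_m\wr W(B_{a_m})$. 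Second, in the case $\lambda^-=\emptyset$, $\lambda^+$ not even, the lemma asserts the equality $N^+_{J^+}=N_J\cap W^+$, not merely that $N_J\cap W^+$ is some complement; this needs the compatibility of minimal coset representatives, i.e.\ that for $x\in W^+$ and $J\subseteq\{s_1,\dots,s_{n-1}\}$ the conditions $\ell_B(sx)>\ell_B(x)$ and $\ell_D(sx)>\ell_D(x)$ agree for $s\in J$ (the relevant simple roots and their positivity are the same in both root systems), so that $X_J\cap W^+=X^+_{J^+}$ and hence $N_J\cap W^+=N^+_{J^+}$ as sets. With these three repairs the proposal becomes a complete proof.
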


The  parabolic subgroup $W^+_{J^+}$ is of the form 
$D_{\Size{\lambda^-}} \times \Sym_{\lambda_1^+} \times \dots \times \Sym_{\lambda_t^+}$,
where $D_m$ is the subgroup of $W^+$ generated by $\{u, s_1, \dots, s_{m-1}\}$,
for $m =  2, \dots, n$.

\begin{Definition} 
  \label{def:non-compliant} 
  We call a double partition $\lambda = (\lambda^+, \lambda^-)$ a
  \emph{non-compliant double partition} if  $\lambda^+$
  consists of a single odd part $m$ and $\lambda^-$ is a nonempty even
  partition of even length.

  We call a conjugacy class $C$ of $W$ a \emph{non-compliant class},
  if, for some odd $n > 4$, there is a non-compliant double partition
  $\lambda$ of $n$ and a parabolic subgroup $W_M$ of $W$ which has an
  irreducible component $W_K$ of type $D_n$, such that $C$ contains an
  element of $W_M$ whose projection on $W_K$ has cycle type $\lambda$.
\end{Definition}

For example, the elements of $W = W(D_5)$ with cycle type $(1, 22)$
form a non-compliant class.  For another example, the elements of $W =
W(D_7)$ of cycle type $(21, 22)$ form a non-compliant class, as some
of them lie in a parabolic subgroup $W_M$ of type $D_5 \times A_1$,
with $D_5$-part of cycle type $(1,22)$.

\begin{Lemma}
  An element $w \in W(D_n)$ of cycle type $\lambda = (\lambda^+,
  \lambda^-)$ lies in a non-compliant class if and only if $\lambda^+$
  is not even and $\lambda^-$ is nonempty and even.
\end{Lemma}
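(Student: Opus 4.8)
The plan is to prove the two implications separately, relying on the description of conjugacy classes of $W(D_n)$ recalled in Section~\ref{sec:intro-d}: conjugate elements have the same cycle type, and conversely elements with the same cycle type $(\lambda^+,\lambda^-)$ with $\lambda^-\neq\emptyset$ are conjugate. We also use the shape of parabolic subgroups of $W(D_n)$. Since the Coxeter diagram of type $D_n$ has a single node of degree three, every parabolic subgroup of $W(D_n)$ is, up to conjugacy, a direct product $W(D_{n'})\times\Sym_{c_1}\times\dots\times\Sym_{c_r}$ acting on pairwise disjoint subsets of $[n]$, in which $W(D_{n'})$ (with $n'\ge 4$, or else this factor is absent) is the only component containing the branch node, while each $\Sym_{c_i}$ is generated by transpositions $s_i$ and hence permutes its support without introducing any signs; in particular, the negative cycles of any element of such a parabolic subgroup are exactly the negative cycles of its projection to the $W(D_{n'})$-factor, and this remains true after conjugation since the cycle type is a conjugacy invariant. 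Finally, recall that for $w\in W(D_n)$ of cycle type $(\lambda^+,\lambda^-)$ the partition $\lambda^-$ automatically has an even number of parts, so if $\lambda^-$ is nonempty and even then $|\lambda^-|\ge 4$.

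For the implication ``$\Leftarrow$'', assume $\lambda^+$ is not even and $\lambda^-$ is nonempty and even. Choose an odd part $m$ of $\lambda^+$ and set $n'=m+|\lambda^-|$. Then $n'$ is odd and $n'\ge 1+4=5>4$, and $\mu=\bigl((m),\lambda^-\bigr)$ is a non-compliant double partition of $n'$ in the sense of Definition~\ref{def:non-compliant}. Let $\nu^+$ be the partition obtained from $\lambda^+$ by deleting one part equal to $m$, so $|\nu^+|=n-n'$, and let $W_M=W(D_{n'})\times\Sym_{\nu_1^+}\times\dots\times\Sym_{\nu_r^+}$ be the parabolic subgroup of $W(D_n)$ generated by $\{u,s_1,\dots,s_{n'-1}\}$ together with the transpositions $s_i$ realising symmetric-group blocks of sizes $\nu_1^+,\dots,\nu_r^+$ on the remaining points $\{n'+1,\dots,n\}$. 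Inside $W_M$ form $\widetilde{w}=(w_\mu;c_1,\dots,c_r)$, where $w_\mu\in W(D_{n'})$ has cycle type $\mu$ and $c_i\in\Sym_{\nu_i^+}$ is a $\nu_i^+$-cycle. Then $\widetilde{w}$ has cycle type $(\lambda^+,\lambda^-)$ in $W(D_n)$, so $\widetilde{w}$ is conjugate to $w$; thus the class $C$ of $w$ contains the element $\widetilde{w}\in W_M$ whose projection onto $W_K=W(D_{n'})$ is $w_\mu$, of cycle type $\mu$. Hence $C$ is non-compliant.

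For the implication ``$\Rightarrow$'', suppose $C$ is non-compliant. Then there are an odd $n'>4$, a non-compliant double partition $\mu=\bigl((m),\mu^-\bigr)$ of $n'$ with $m$ odd and $\mu^-$ nonempty and even, a parabolic subgroup $W_M$ of $W(D_n)$ with an irreducible component $W_K\cong W(D_{n'})$, and an element $w'\in C\cap W_M$ whose projection onto $W_K$ has cycle type $\mu$. Since $n'>4$, $W_K$ is the unique type-$D$ component of $W_M$, so by the remark above the negative cycles of $w'$ are precisely those of this projection, namely $\mu^-$; the positive cycles of $w'$ consist of those of the projection (in particular the odd part $m$) together with positive cycles from the $\Sym$-factors and fixed points. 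Therefore $w'$ has cycle type $(\lambda^+,\lambda^-)$ with $\lambda^-=\mu^-$ nonempty and even and with $\lambda^+$ containing the odd part $m$, hence not even. As $w$ and $w'$ are conjugate, $w$ has this same cycle type, which is what we wanted.

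The only slightly delicate point is the book-keeping with parabolic subgroups of type $D_n$: for ``$\Rightarrow$'' one must confine the negative cycles to the $W(D_{n'})$-component, and for ``$\Leftarrow$'' one must produce a parabolic subgroup of the prescribed shape. Both reduce to the single observation that the $D_n$ diagram has exactly one branch node, so a parabolic subgroup has at most one irreducible component of type $D$ (necessarily of rank $\ge 4$), all others being symmetric groups acting on their support without signs; I expect this to be routine once stated carefully.
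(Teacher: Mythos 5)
Your proof is correct. Note that the paper states this lemma without proof, treating it as an immediate consequence of Definition~\ref{def:non-compliant} and the type-$D$ combinatorics recalled in Section~\ref{sec:intro-d}, so there is no argument in the paper to compare against; what you have written is the natural routine verification (unwind the definition, use that cycle type determines the class once $\lambda^-\neq\emptyset$, and that the number of negative cycles is even and is carried entirely by the type-$D$ component of the relevant parabolic subgroup). One small imprecision worth fixing: your blanket structural claim about arbitrary parabolic subgroups of $W(D_n)$ is too strong, since a standard parabolic subgroup may have an irreducible component generated by a subset containing both $u$ and $s_1$ of type $D_2=A_1\times A_1$ or $D_3=A_3$; such a component does produce negative cycles and is not conjugate to a sign-free symmetric group, so the dichotomy ``type-$D$ factor of rank $\geq 4$ or sign-free $\Sym$-factors'' is not literally true in general. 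This does not affect your argument, because in the only situation you invoke the remark the component $W_K$ has type $D_{n'}$ with $n'>4$ and therefore already contains $u$, $s_1$ and the branch node $s_2$; every other component is then generated by transpositions $s_i$ alone and contributes only positive cycles, which is exactly what the ``$\Rightarrow$'' direction needs, while the ``$\Leftarrow$'' direction only requires exhibiting one explicit standard parabolic subgroup, as you do.
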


The next result shows that, in a Coxeter group $W^+$ of type $D_n$,
the centralizer $C_{W^+}(w)$ splits over $C_{W^+_J}(w)$, unless the
class of $w \in W^+$ is non-compliant.  Here, we write $J^+(w)
\subseteq S^+$ for the set of generators occurring in a reduced
expression of $w$ when considered as an element of $W^+$, in order to
distinguish it from the set $J(w) \subseteq S$
of generators in a reduced expression of $w \in W$.

\begin{Proposition} 
  \label{pro:d} 
  Let $\lambda = (\lambda^+,\lambda^-) \dpartition n$ be such that
  $\ell(\lambda^-)$ is even.
Let $w_{\lambda}$  and $N_{\lambda}$ be as in Proposition~\ref{pro:b}
  and let $J^+ = J^+(w_{\lambda})$ be the corresponding subset of $S^+$.
  Then the following hold.
  \begin{enumerate}
  \item If $\lambda^+$ is even then $N_{\lambda}$ is a complement of
  $C_{W^+_{J^+}}(w_{\lambda})$ in $C_{W^+}(w_{\lambda})$.
\item If $\lambda^+$ is not even and $\lambda^- = \emptyset$ then
  $N_{\lambda} \cap W^+$ is a subgroup of index $2$ in $N_{\lambda}$
  and a complement of $C_{W^+_{J^+}}(w_{\lambda})$ in
  $C_{W^+}(w_{\lambda})$.
\item If $\lambda^+ = (1^{a_1}, \dots, n^{a_n})$ and $\lambda^- =
  (\lambda^-_1, \dots, \lambda^-_s)$ is not even then there is an
  index $j \leq s$ such that $k = \lambda^-_1 + \dots + \lambda^-_j$
  is odd, and the subgroup
\begin{align*}
  N^+_{\lambda} =
\Span{
t(0, k)^m\, t(o_m, m), \,
s(o_m{+}im, m)
\mid i=0,\dots,a_m{-}2,\,
 m = 1, \dots, n,\,
a_m > 0}
\end{align*}
is a complement of
  $C_{W^+_{J^+}}(w_{\lambda})$ in $C_{W^+}(w_{\lambda})$.
  \end{enumerate}
\end{Proposition}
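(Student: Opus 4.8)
The plan is to derive all three cases from Proposition~\ref{pro:b}, which supplies the complement $N_{\lambda}$ of $C_{W_J}(w_{\lambda})$ in $C_W(w_{\lambda})$ inside $W = W(B_n)$, combined with the dictionary in Lemma~\ref{la:d} and with Theorem~\ref{thm:2} applied inside $W^+ = W(D_n)$. Write $\epsilon\colon W \to \{\pm1\}$ for the homomorphism with kernel $W^+$; thus $\epsilon(v)$ is the parity of the number of negative cycles of $v$, and in particular $\epsilon(t(o,m)) = (-1)^m$ while $\epsilon(s(o,m)) = 1$. By Lemma~\ref{la:d}(i) and~(iii), $w_{\lambda}$ has minimal length in its class in $W^+$ and $W^+_{J^+}$ is the smallest parabolic subgroup of $W^+$ that contains it, so Theorem~\ref{thm:2} applies in $W^+$; together with the fact that the normalizer of a standard parabolic in a finite Coxeter group splits over it with Howlett complement $N^+_{J^+}$, this gives $\Size{C_{W^+}(w_{\lambda})} = \Size{C_{W^+_{J^+}}(w_{\lambda})} \cdot \Size{N^+_{J^+}}$. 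It therefore suffices, in each case, to produce a subgroup $M \le C_{W^+}(w_{\lambda})$ with $M \cap W^+_{J^+} = 1$ and $\Size{M} = \Size{N^+_{J^+}}$: then $C_{W^+_{J^+}}(w_{\lambda})\,M = C_{W^+}(w_{\lambda})$ by a count of orders (using $M \cap C_{W^+_{J^+}}(w_{\lambda}) \subseteq M \cap W^+_{J^+} = 1$), and $M$ is a complement because $C_{W^+_{J^+}}(w_{\lambda})$ is normal in $C_{W^+}(w_{\lambda})$ by Theorem~\ref{thm:1}(i). We shall use repeatedly that $N_{\lambda}$ centralizes $w_{\lambda}$, that $N_{\lambda} \cap W_J = 1$, and that $\Size{N_{\lambda}} = \Size{N_J}$ (Proposition~\ref{pro:b} and its proof), together with $W^+_{J^+} \subseteq W_J$ (Lemma~\ref{la:d}(iv)).

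Cases (i) and (ii) are then quick. In case (i), $\lambda^+$ is even, so every generator $t(o_m,m)$ of $N_{\lambda}$ has $m$ even; hence $\epsilon$ kills all generators of $N_{\lambda}$ and $N_{\lambda} \subseteq W^+$. Take $M = N_{\lambda}$; then $M \cap W^+_{J^+} \subseteq N_{\lambda} \cap W_J = 1$, and $\Size{M} = \Size{N_J} = \Size{N^+_{J^+}}$ by Lemma~\ref{la:d}(vi). In case (ii), $\lambda^+$ is not even and $\lambda^- = \emptyset$, so some generator $t(o_m,m)$ of $N_{\lambda}$ has $m$ odd, whence $\epsilon$ is onto on $N_{\lambda}$ and $M := N_{\lambda} \cap W^+ = \ker(\epsilon|_{N_{\lambda}})$ has index $2$ in $N_{\lambda}$; again $M \cap W^+_{J^+} \subseteq N_{\lambda} \cap W_J = 1$, and $\Size{M} = \Size{N_J}/2 = \Size{N^+_{J^+}}$ by Lemma~\ref{la:d}(vi).

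The substance is case (iii): $\lambda^+$ not even and $\lambda^-$ not even (so nonempty). Since $\lambda^-$ is not even, some partial sum $k = \lambda^-_1 + \dots + \lambda^-_j$, $1 \le j \le s$, is odd (otherwise every part $\lambda^-_i$, a difference of two such sums, would be even); fix such a $j$ and the corresponding odd $k$. The key point is that $t(0,k)$ centralizes $w_{\lambda}$: as $k$ is a boundary between consecutive blocks of integers carrying negative cycles of $w_{\lambda}$, we may factor $w_{\lambda} = \sigma\tau$ with $\sigma$ the product of those negative cycles lying in $\{1,\dots,k\}$ — an element of the standard parabolic $W(B_k)$ — and $\tau$ supported on $\{k+1,\dots,n\}$; then $t(0,k)$ is the longest, hence central, element of $W(B_k)$ and so commutes with $\sigma$, and it commutes with $\tau$ for disjointness of supports. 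Since $t(0,k)^2 = 1$ and $t(0,k)$ has support disjoint from that of $N_{\lambda}$ (which fixes $\{1,\dots,\Size{\lambda^-}\}$ pointwise), the map $\Phi\colon N_{\lambda} \to W$ defined by $\Phi(g) = g$ when $\epsilon(g) = 1$ and $\Phi(g) = t(0,k)\,g$ when $\epsilon(g) = -1$ is an injective homomorphism, and its image is exactly the subgroup $N^+_{\lambda}$ of the statement, as one sees on generators ($\Phi(t(o_m,m)) = t(0,k)^m\,t(o_m,m)$ and $\Phi(s(o_m{+}im,m)) = s(o_m{+}im,m)$, using $t(0,k)^2 = 1$). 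Hence $\Size{N^+_{\lambda}} = \Size{N_{\lambda}} = \Size{N_J} = \Size{N^+_{J^+}}$ (the last equality by Lemma~\ref{la:d}(vi), since $\lambda^- \ne \emptyset$). Because $k$ is \emph{odd}, $\epsilon(t(0,k)\,g) = (-1)^k\epsilon(g) = 1$ whenever $\epsilon(g) = -1$, so $N^+_{\lambda} \subseteq W^+$; and $N^+_{\lambda} \subseteq C_{W^+}(w_{\lambda})$ since each $\Phi(g)$ is a product of the centralizing elements $g$ and possibly $t(0,k)$. Finally $N^+_{\lambda} \cap W^+_{J^+} = 1$: an element $\Phi(g)$ of this intersection with $\epsilon(g) = 1$ lies in $N_{\lambda} \cap W_J = 1$, whereas one with $\epsilon(g) = -1$ would act on $\{1,\dots,\Size{\lambda^-}\}$ exactly as $t(0,k)$ and would thus place $t(0,k)$ in the type-$D$ factor $D_{\Size{\lambda^-}}$ of $W^+_{J^+}$, which is impossible as $t(0,k)$ has $k$ (odd) negative cycles. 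Taking $M = N^+_{\lambda}$ completes the proof.

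The step I expect to be the real obstacle is the choice of $k$ in case (iii): it must simultaneously be a boundary of the negative-cycle blocks of $w_{\lambda}$ — so that conjugation by $t(0,k)$ merely reverses complete negative cycles and hence fixes $w_{\lambda}$ — and be odd, so that the modified generators $t(0,k)^m t(o_m,m)$ lie in $W^+$ yet avoid the parabolic $W^+_{J^+}$. Such a $k$ exists precisely when $\lambda^-$ is not even, and it is exactly its non-existence in the remaining configuration ($\lambda^+$ not even, $\lambda^-$ nonempty and even) that produces the non-compliant classes.
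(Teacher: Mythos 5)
Your proof is correct and takes essentially the same route as the paper: the same candidate complements ($N_\lambda$, $N_\lambda\cap W^+$, and the $t(0,k)$-twisted group $N^+_\lambda$ built from an odd partial sum $k$ of $\lambda^-$), with the $B_n$/$D_n$ comparison of Lemma~\ref{la:d} supplying the index computations. The only difference is bookkeeping: where the paper reduces to a complement $N^*$ of $W_J$ in $N_W(W_J)$ and invokes Proposition~\ref{pro:M}, you count orders directly in $W^+$ via Theorem~\ref{thm:2} and the Howlett splitting, and in case (iii) you spell out (via the homomorphism twisting odd-sign generators by $t(0,k)$) the verifications that $N^+_\lambda$ centralizes $w_\lambda$, lies in $W^+$, has the right order and meets $W^+_{J^+}$ trivially, which the paper merely asserts.
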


Note that $t(0,k)^m = 1$ if $m$ is even
and $t(0,k)^m = t(0,k)$ if $m$ is odd.

\begin{proof}
Let $J = J(w_{\lambda})$ be the subset of $S$ corresponding to $\lambda$.
In all three cases it suffices to find a complement $N^*$
of $W_J$ in its normalizer in $W$ that centralizes $w_{\lambda}$ 
such that $\Size{N^* \cap W^+} = \Size{N^+_{J^+}}$.
For then $N^* \cap W^+_{J^+} = 1$ and
$N_{W^+}(W^+_{J^+}) \subseteq N_W(W_J) = W_J N^*$
imply that $N^* \cap W^+$ is a 
complement of 
$W^+_{J^+}$ in its normalizer in $W^+$ that
centralizes $w_{\lambda}$, and the claim follows with
Proposition~\ref{pro:M}.

(i)
If $\lambda^+$ is even then 
$N_{\lambda}$ is contained in $W^+$  and $N^* = N_{\lambda}$ will do.

(ii)
If $\lambda^- = \emptyset$ and $\lambda^+$ is not even
then $J^+ = J$ but $N_{J^+}$ is subgroup of index $2$ in $N_J$
and $N^* = N_{\lambda} \cap W^+$ will do.

(iii) If $\lambda^-$ is not even
then $N^+_{\lambda}$ is a 
complement of $W_J$ in its normalizer in 
$W$ that is contained in  $W^+$
and centralizes $w_{\lambda}$,
whence $N^* = N^+_{\lambda}$ will do.
\end{proof}

\subsection{Type \texorpdfstring{$I$}{I}.} 
\label{sec:i}

Suppose $W$ is a Coxeter group of type $I_2(m)$.  Then $W$ is the
group generated by generators $s_1$ and $s_2$ satisfying $(s_1s_2)^m =
(s_2s_1)^m$. Each element of $W$ is either cuspidal or an
involution. Hence the theorem for this type follows from
Lemma~\ref{la:cori}.

\subsection{Exceptional Types.}  
\label{sec:exceptional}

Although in type $A$ each conjugacy class contains an element $w$ such
that the normalizer complement $N_J$ is also a centralizer complement,
this cannot be expected in general to be the case.  However, from the
preceding examples one sees that it is frequently possible to
construct from $N_J$ an isomorphic copy $N_J^*$ which is a centralizer
complement.  In each of the above examples, $N_J^*$ is obtained from
$N_J$ by replacing generators $x_i$ of $N_J$ by products $w_L x_i$ for
suitable subsets $L \subseteq J$.

Based on this observation,
we formulate an algorithm, which in practice  always
finds a centralizer complement, except for elements of non-compliant
classes.

\bigskip
\noindent
\textbf{Algorithm} CentralizerComplement.
\nopagebreak

\smallskip
\noindent
\textbf{Input:} A finite Coxeter group $W$ and an element $w$
of minimal length in its conjugacy class in $W$.

\smallskip
\noindent
\textbf{Output:} a centralizer complement for $w$, or fail if none exists.
\begin{enumerate} \renewcommand{\labelenumi}{\arabic{enumi}.}
\item set $J \gets J(w)$.
\item find involutions $x_1, \dots, x_r$ generating the normalizer
complement $N_J$.
\item for each element $v$ of minimal length in the $W_J$-conjugacy
  class of $w$ do the following:
\begin{itemize}
\item let $u \in W_J$ be such that $v^u = w$;
\item  for each $i = 1, \dots, r$, set 
\begin{align*}
  Y_i \gets 
  \begin{cases}
    \{x_i\}, & \text{if } v^{x_i} = v, \\
\{w_L x_i: L \subseteq J,\, x_i^{w_L} = x_i,\, v^{w_L} = v^{x_i}\}, & \text{otherwise.}
  \end{cases}
\end{align*}
\item if there are elements $y_i \in Y_i$, $i = 1, \dots, r$, such that
$M = \langle y_1, \dots, y_r\rangle$ satisfies
 $M \cap W_J = 1$
then return $M^u$.
\end{itemize}
\item return fail (if we ever get here).
\end{enumerate}

Note that, by Proposition~\ref{pro:M}, any group $M$ found in this way
is necessarily a complement of the centralizer of $w$ in~$W_J$.

For $W$ irreducible of exceptional type, the algorithm produces a centralizer
complement in all but seven cases.  Each case corresponds to a
non-compliant class from the following table.  In this table we list,
for each non-compliant class $C$ of $W$, its position $i$ in CHEVIE's
list of conjugacy classes of $W$, its name, a reduced expression for a
representative $w$ of minimal length, the set $J(w)$, a set $M
\supseteq J(w)$, the type of $W_M$ exhibiting a direct factor of type
$D_{2l+1}$, and the label $\lambda$ of the conjugacy class of
$W(D_{2l+1})$ containing the projection of $w$.
\begin{align*}
\begin{array}{cccccccc}
  \hline
W & i & \text{name} & w \in C & J(w) & M & \text{type} & \lambda \\
  \hline
  \hline
E_6 & 7 & D_4(a_1) & {}_{342345} & 2345 & 12345 & D_5 & (1,22) \\
  \hline
  \hline
E_7 & 9 & D_4(a_1) & {}_{425423} & 2345 & 12345 & D_5 & (1,22) \\
    & 42 & D_4(a_1)+A_1 & {}_{4254237} & 23457 & 123457  & D_5 \times A_1 & (1,22) \\
  \hline
  \hline
E_8 & 16 & D_4(a_1) & {}_{242345} & 2345 & 12345 & D_5 & (1,22) \\
    & 44 & D_6(a_1) & {}_{24234567} & 234567 & 2345678 & D_7 & (1,42) \\
    & 53 & D_4(a_1)+A_2 & {}_{34234578} & 234578 & 1234578 & D_5 \times A_2 & (1,22) \\
    & 73 & D_4(a_1)+A_1 & {}_{3542348} & 23458 & 123458 & D_5 \times A_1 & (1,22) \\
  \hline
\end{array}
\end{align*}

In the next section we show that in all of these cases, and indeed
whenever $w$ lies in a non-compliant class, no complement exists.
 
\subsection{Non-compliance.}
\label{sec:n-c}

The class of $W(D_5)$ with label $\lambda = (1, 22)$ contains the element 
\begin{align*}
  w_{\lambda} 
= ts_1 s_2s_1ts_1s_2s_3 
= (ts_1 t)s_2(ts_1t)s_1s_2s_3 
= us_2us_1s_2s_3, 
\end{align*}
which lies in the parabolic subgroup $W_J$ of
type $D_4$.  Its centralizer $C_{W_J}(w)$ in $W(D_4)$ has order $16$
and its centralizer $C_W(w)$ in $W(D_5)$ has order $32$.  However,
$C_{W_J}(w)$ has no complement in $C_W(w)$, since the coset $C_W(w)
\setminus C_{W_J}(w)$ contains no element of order $2$ (as a
straightforward computation in GAP will confirm).

The next result shows that this is indeed always the case, when the
cycle type of $w \in W(D_n)$ is a non-compliant double partition
of~$n$.

\begin{Proposition} \label{prop:non-compliant} 
  Suppose that $W$ is of type $D_n$
  and let $w \in W$ be an element of minimal length in its conjugacy
  class with $J(w) = J$.  If the cycle type of $w \in W$ is a
  non-compliant double partition of $n$ then the centralizer
  $C_{W_J}(w)$ has no complement in $C_W(w)$.
\end{Proposition}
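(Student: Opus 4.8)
The plan is to reduce the claim, via Lemma~\ref{la:d}, to a statement about elements of order $2$ in a coset, and then to prove that statement by a direct calculation with signed permutation matrices. By Definition~\ref{def:non-compliant} the cycle type $\lambda = (\lambda^+,\lambda^-)$ consists of a single odd part $\lambda^+ = (m)$ and a nonempty even partition $\lambda^-$ of even length; write $k = \Size{\lambda^-}$ and $n = k + m$. Since $\lambda^-$ is nonempty, Lemma~\ref{la:d}(v) tells us that $C_{W^+_{J^+}}(w) = C_{W_J}(w)\cap W^+$ has index $2$ in $C_{W_J}(w)$, and Lemma~\ref{la:d}(ii) tells us that $C_{W^+}(w)$ has index $2$ in the type-$B$ centralizer $C_W(w)$. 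Combining these, $C_{W^+_{J^+}}(w)$ has index $2$ in $C_{W^+}(w)$, so a complement, if it exists, must be a subgroup of order $2$ generated by an involution lying in the nontrivial coset $C_{W^+}(w)\setminus C_{W^+_{J^+}}(w)$. Thus the whole proposition is equivalent to: \emph{this coset contains no involution.}

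First I would pin down the relevant centralizer in type $B$. For $w = w_{\lambda}$ with $\lambda^+ = (m)$, $m$ odd, and $\lambda^- = (\lambda^-_1,\dots,\lambda^-_s)$ even of even length $s$, the type-$B$ centralizer $C_W(w)$ decomposes as $C_{W(B_k)}(w')\times C_m$, where $w'$ is the negative-cycle part and $C_m$ is the cyclic group generated by the single positive $m$-cycle $c = (k{+}1,\dots,k{+}n)$ (using $n$ here for $k+m$, i.e.\ $c$ runs over the last $m$ points). The key point is that the ``$D$ versus $B$'' distinction is governed entirely by the total number of sign changes: an element of $C_W(w)$ lies in $W^+$ iff it effects an even number of sign changes, and it lies in $W^+_{J^+}$ iff moreover its restriction to the last $m$ points effects an even number of sign changes. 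Since $m$ is odd, the only powers of $c$ that are honest permutations of $[n]$ with no signs are the powers of $c$ itself; more importantly, $c\in W^+$ (it has no negative cycles) but $c$ is a single $m$-cycle, hence an \emph{odd} permutation when $m$ is even and an even permutation when $m$ is odd — here $m$ is odd, so $c$ is an even permutation and $c\in W^+$. The upshot of this bookkeeping is an explicit description: $C_{W^+}(w)\setminus C_{W^+_{J^+}}(w)$ consists of pairs $(g, c^i)$ where $g\in C_{W(B_k)}(w')$ has an \emph{odd} number of negative cycles among the cycles permuted and $c^i$ contributes the ``missing'' sign — the precise parity condition I would read off from Lemma~\ref{la:d}(ii),(iv),(v), which already do exactly this accounting.

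Next I would show no element of that coset squares to the identity. Write a candidate involution as $\sigma = (g, c^i) \in C_{W(B_k)}(w')\times C_m$. For $\sigma^2 = 1$ we need $g^2 = 1$ in $W(B_k)$ and $c^{2i} = 1$, i.e.\ $m\mid 2i$; since $m$ is odd this forces $m\mid i$, so $c^i = 1$. Therefore $\sigma = (g,1)$ lies entirely in the type-$B_k$ factor $W(B_k)$. But then $\sigma$ effects no sign changes on the last $m$ points at all, so whether $\sigma\in W^+_{J^+}$ or $\sigma\in W^+\setminus W^+_{J^+}$ is decided purely inside $W(B_k)$ — and since the $W(B_k)$-component of $C_{W^+_{J^+}}(w)$ is by definition $C_{W(B_k)}(w')\cap W^+(D_k)$ while the $W(B_k)$-component of $C_{W^+}(w)$ reaching outside $W^+_{J^+}$ would have to use an odd sign change on the last $m$ points, we get a contradiction: an element of $C_{W^+}(w)$ whose last-$m$-point part is trivial and which lies in $W^+$ automatically lies in $W^+_{J^+}$. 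Hence no involution sits in the nontrivial coset, and so $C_{W^+_{J^+}}(w)$ has no complement in $C_{W^+}(w)$.

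The main obstacle I anticipate is getting the sign-change parity bookkeeping exactly right — in particular making sure the ``odd number of negative cycles'' condition characterizing $W^+\setminus W^+_{J^+}$ inside $C_W(w)$ is stated correctly relative to which block ($B_k$-part vs.\ the $m$-cycle) carries the parity, and being careful that the freedom ``change the sign of all elements in a cycle without changing the signed permutation'' (noted in Section~\ref{sec:intro-b}) is not accidentally double-counted. Lemma~\ref{la:d} has already done most of this work, so the cleanest write-up would invoke parts (ii), (iv), (v) of that lemma to set up the coset structure, reduce to an involution $\sigma = (g, c^i)$, observe $m\mid i$ forces $c^i = 1$ because $m$ is odd, and then derive the contradiction from the fact that an element of $C_{W^+}(w)$ acting trivially on the $m$-cycle block cannot lie outside $W^+_{J^+}$. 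The small-case check ($D_5$, $\lambda = (1,22)$, orders $16$ and $32$) done in GAP just above serves as a sanity check on the parity conventions.
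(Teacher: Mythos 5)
There is a genuine gap, and it sits exactly where the hypothesis that $\lambda^-$ is an \emph{even} partition must be used --- your argument never invokes that hypothesis, which is already a warning sign. The error is in your decomposition of the type-$B$ centralizer: for $\lambda^+=(m)$ the factor attached to the positive $m$-cycle $c$ is not the cyclic group $\Span{c}$ of order $m$, but the centralizer of $c$ in the hyperoctahedral group on those $m$ points, which has order $2m$ and contains the sign change $\tau = t(\Size{\lambda^-},m)$ negating the whole block (this is the wreath-product factor $C_m \wr W(B_{a_m})$ of Section~\ref{sec:b}). Since membership in $W^+$ is governed by the parity of the total number of sign changes and $m$ is odd, the nontrivial coset $C_{W^+}(w)\setminus C_{W^+_{J^+}}(w)$ does not consist of pairs $(g,c^i)$ at all: it is $\{(g,\tau c^i)\colon g \in C_{W(B_k)}(w')$ with an odd number of entries $-1\}$. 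Your reduction ``an involution must act trivially on the $m$-block'' therefore misses precisely the dangerous candidates, namely the elements $(g,\tau)$ with $g^2=1$ and $g$ carrying an odd number of sign changes. Ruling these out is the actual content of the paper's proof: an involution has a symmetric signed permutation matrix $\diag(A,B)$; a point of $\{1,\dots,k\}$ sent to its negative forces its entire $w$-cycle to be negated, and since all parts of $\lambda^-$ are even the diagonal of $A$ has an even number of entries $-1$; by symmetry the off-diagonal entries $-1$ of $A$ pair up, so $A$ has an even total number of entries $-1$, hence so does $B$, and the involution already lies in $C_{W_J}(w)$. That this step genuinely needs the evenness of $\lambda^-$ is shown by compliant classes such as $(1,13)$ in $D_5$ (or Proposition~\ref{pro:d}(iii) in general): there $m=1$ is odd and $\lambda^-$ is nonempty of even length, so every condition your argument actually uses is satisfied, yet an involution of the form $(g,\tau)$ with $g$ odd does exist and generates a complement --- so your argument, as written, would ``prove'' a false statement.

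A smaller point: the index-$2$ reduction does not follow from Lemma~\ref{la:d}(ii) and (v) alone; you also need $[C_W(w):C_{W_J}(w)]=2$ in type $B$ (equivalently, $N_J\cong W(B_1)$ when $\lambda^+=(m)$, from Section~\ref{sec:b} together with Theorem~\ref{thm:2}), which is true but must be said --- the paper gets the corresponding fact from the description of $N_J$ and Lemma~\ref{la:d}(vi). Your overall strategy --- reduce to the nonexistence of involutions in the nontrivial coset --- coincides with the paper's; what is missing is the parity argument for involutions whose action on the $m$-block is the sign change $\tau$, which is the heart of the matter.
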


\begin{proof}
Recall from Section~\ref{sec:intro-b} that elements of $W(B_n)$ can be represented as signed
permutation matrices, i.e., matrices with 
exactly one non-zero entry $1$ or $-1$ in each row and column.
Such an element  lies in $W(D_n)$ if and only if its matrix has
an even number of entries $-1$,
and it is an involution if and only if the matrix is symmetric.

Now suppose that $n = m + k$ is odd and that $w \in W = W(D_n)$ is an
element of minimal length in a conjugacy class with cycle type
$\lambda = (\lambda^+, \lambda^-)$ where the partition $\lambda^+$
consists of a single odd part $m$ and $\lambda^-$ is a nontrivial
partition of an even number $k$, consisting of an even number of even
parts.  Then $W_J$ for $J = J(w)$ has type $D_k \times A_{m-1}$ and,
by the description of $N_J$ in Section~\ref{sec:b} and
Lemma~\ref{la:d}(vi), its normalizer $N_W(W_J)$ has a complement of
order $2$ (and of type $B_1$), generated by the quotient $w_J w_0$.

We may assume that $J = S \setminus \{s_{k+1}\}$, so that, as signed
permutation on the set $\{1, \dots, n\}$, the element $w$ induces an
even number of negative cycles on the $k$ points $\{1, \dots, k\}$ and
a positive $m$-cycle on the $m$ points $\{k+1, \dots, n\}$.  The
centralizer $C_W(w)$ cannot move points from outside the $m$-cycle
into the $m$-cycle and thus consists of block diagonal matrices
\begin{align*}
\diag(A, B) = \left[
\begin{array}{cc}
  A & 0 \\ 0 & B
\end{array}
\right],
\end{align*}
of a $k \times k$ matrix $A$ and 
an $m \times m$-matrix $B$,
which modulo $2$ have the same number of
entries $-1$
since $C_W(w)$ is a subgroup of $W(D_n)$.
Moreover, for each element $\diag(A, B)$ in $C_W(w)$
the number of entries $-1$ on the diagonal of $A$,
is even, since with every point in $\{1, \dots, k\}$
being mapped to its negative, the entire
cycle which contains it must be negated.

The centralizer of $w$ in $W_J$ consists precisely
of those elements $\diag(A, B) \in C_W(w)$ which have an
even number of entries $-1$ in both $A$ and $B$,
since $A$ is the matrix  of an element in $W(D_k)$.

Let $u$ be an involution in $C_W(w)$.  Then its matrix $\diag(A, B)$
is symmetric, and an even number of entries $-1$ on the diagonal of
$A$ implies that both $A$ and $B$ have an even number of entries $-1$
and thus $u \in C_{W_J}(w)$.

It follows that $C_{W_J}(w)$ has no complement in $C_W(w)$.
\end{proof}

More generally, if $w$ lies in a non-compliant class of 
a finite Coxeter group $W$, then
its centralizer has no complement.

\begin{Theorem} \label{thm:non-compliant} 
  Let $W$ be a finite Coxeter
  group.  Suppose $w$ is an element of minimal length in a
  non-compliant conjugacy class of $W$ with $J(w) = J$.  Then the
  centralizer $C_{W_J}(w)$ has no complement in $C_W(w)$.
\end{Theorem}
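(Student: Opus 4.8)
The plan is to reduce to Proposition~\ref{prop:non-compliant} by a projection argument. Since replacing $w$ by a conjugate $w^c$ replaces $W_{J(w)}$, $C_{W_{J(w)}}(w)$ and $C_W(w)$ by their $c$-conjugates, and since for $w$ of minimal length $W_{J(w)}$ is simply the smallest parabolic subgroup $V$ of $W$ containing $w$, it suffices to prove the following: for \emph{any} representative $w$ of the given non-compliant class and the smallest parabolic subgroup $V$ of $W$ containing it, $C_V(w)$ has no complement in $C_W(w)$. By Definition~\ref{def:non-compliant} there is a parabolic subgroup $W_M$ of $W$ with an irreducible component $W_K$ of type $D_n$ ($n$ odd, $n > 4$) and an element of the class lying in $W_M$ whose image under the projection onto $W_K$ has a non-compliant cycle type $\lambda = (\lambda^+, \lambda^-)$, say $\lambda^+ = (m)$. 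Write $W_M = W_K \times W_L$, with $W_L$ the product of the remaining components, and let $\pi \colon W_M \to W_K$ be the projection. Since $\lambda^- \neq \emptyset$, the cycle type $\lambda$ determines a single $W_K$-conjugacy class, so, conjugating within $W_M$ by an element of $W_K$, I may choose the representative $w$ so that $w \in W_M$ and $\pi(w) = w_\lambda$, the minimal length representative in $W_K$ from Section~\ref{sec:intro-d}. Put $\bar w = \pi(w)$ and let $w'$ be the $W_L$-component of $w$, so that $w = \bar w\,w'$ with $\bar w$ and $w'$ commuting. Then the smallest parabolic $V$ containing $w$ is $V = V_K \times V_L$, where $V_K \le W_K$ is the smallest (standard) parabolic of $W_K$ containing $\bar w$ --- of type $D_{\Size{\lambda^-}} \times A_{m-1}$, exactly the configuration of Proposition~\ref{prop:non-compliant} --- and $V_L \le W_L$ is the smallest parabolic containing $w'$. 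Observe that $\bar w$ is cuspidal in $V_K$: its $D$-part has the cuspidal cycle type $(\emptyset, \lambda^-)$ and its $A$-part is a Coxeter element.

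Next I would single out, inside $C_W(w)$, a subgroup $\hat C$ with $C_V(w) \le \hat C$ of index $2$ that is controlled by the $D_n$-component. By the proof of Proposition~\ref{prop:non-compliant}, $N_{W_K}(V_K)/V_K$ has order $2$, generated by the image of $x := w_{V_K}\,w_{W_K}$, the product of the longest elements of $V_K$ and of $W_K$. Since $\bar w$ is cuspidal in $V_K$, Theorem~\ref{3.2.11} applied inside $W_K$ gives that $\bar w^x$ lies in $V_K$ and is $V_K$-conjugate to $\bar w$; fixing $u \in V_K$ with $\bar w^x = \bar w^u$ and setting $y := x u^{-1}$ yields $y \in C_{W_K}(\bar w)$. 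As $y \in W_K$ commutes elementwise with $W_L \ni w'$, in fact $y \in C_W(w)$. Moreover $y \equiv x \pmod{V_K}$, so $y \notin V$, while $y^2 \in V_K \le V$ forces $y^2 \in C_V(w)$. Since $C_W(w)$ permutes the parabolic subgroups of $W$ containing $w$, it normalizes their intersection $V$ and hence also $C_V(w) = C_W(w) \cap V$; therefore $\hat C := C_V(w) \cup y\,C_V(w)$ is a subgroup of $C_W(w)$ containing $C_V(w)$ with index $2$.

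The heart of the argument is to show that $\hat C \setminus C_V(w) = y\,C_V(w)$ contains no involution, and for this I would push down by $\pi$. Given $g \in C_V(w)$, write $g = g_K\,g_L$ with $g_K \in C_{V_K}(\bar w)$ and $g_L \in C_{V_L}(w')$; then $\pi(y g) = y\,g_K \in C_{W_K}(\bar w)$, and since $y\,g_K \equiv x \pmod{V_K}$ this element lies in the coset $x\,C_{V_K}(\bar w)$. By Theorem~\ref{thm:2} applied inside $W_K$ together with $\Size{N_{W_K}(V_K)/V_K} = 2$, the subgroup $C_{V_K}(\bar w)$ has index $2$ in $C_{W_K}(\bar w)$, so $x\,C_{V_K}(\bar w)$ is its unique non-trivial coset; by Proposition~\ref{prop:non-compliant} this coset contains no element of order $2$. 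As $\pi$ is a homomorphism, $(y g)^2 = 1$ would force $\pi(y g)^2 = 1$, which is impossible. Hence $\hat C$ does not split over $C_V(w)$.

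Finally, suppose for contradiction that $C_V(w)$ had a complement $M$ in $C_W(w)$. Dedekind's modular law gives $\hat C = \hat C \cap \bigl(C_V(w)\,M\bigr) = C_V(w)\,(\hat C \cap M)$ with $(\hat C \cap M) \cap C_V(w) = 1$, so $\hat C \cap M$ would be a complement of $C_V(w)$ in $\hat C$, hence of order $2$ and generated by an involution of $\hat C \setminus C_V(w)$ --- contradicting the previous paragraph. Thus no complement exists. I expect the real work to lie in the first paragraph, namely verifying that a class representative can indeed be conjugated into $W_M$ with $\pi(w) = w_\lambda$ and that the cited results (Theorems~\ref{thm:2} and~\ref{3.2.11}, Proposition~\ref{prop:non-compliant}) apply --- in particular that Theorems~\ref{thm:2} and~\ref{3.2.11} need the element only to be cuspidal in the smallest parabolic containing it, not of minimal length in $W$. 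Granting the setup, the projection argument itself is short.
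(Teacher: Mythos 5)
Your argument is correct, but it reaches Proposition~\ref{prop:non-compliant} by a different mechanism than the paper. The paper's proof consists of two formal reduction steps: first, for a direct product $W_1\times W_2$ it asserts that non-splitting in the factor forces non-splitting in the product; second, if $w\in W_L$ and $N$ were a complement of $C_{W_J}(w)$ in $C_W(w)$, then since $C_{W_J}(w)\subseteq W_L$ the modular law gives $C_{W_L}(w)=C_{W_J}(w)\rtimes(N\cap W_L)$, so a complement in $W$ would restrict to one in the parabolic carrying the $D$-component, contradicting Proposition~\ref{prop:non-compliant}. You instead work inside $C_W(w)$ itself: after conjugating the representative so that its $D_n$-component is $w_\lambda$, you manufacture an explicit element $y$ in the $D_n$-factor lying in $C_W(w)\setminus V$ with $y^2\in C_V(w)$, form the index-two overgroup $\hat C=C_V(w)\cup y\,C_V(w)$, exclude involutions from the nontrivial coset by projecting to $W_K$ (index two there, plus Proposition~\ref{prop:non-compliant}, rules out involutions outside $C_{V_K}(\bar w)$), and then apply the modular law once to a hypothetical complement. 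The trade-off: your route is longer but more self-contained --- the paper's direct-product reduction is stated without proof and is not a general group-theoretic fact (a complement to $A_1\times A_2$ in $G_1\times G_2$ need not produce one to $A_1$ in $G_1$ without extra information); in the case at hand it is rescued precisely by the index-two and no-involution-outside facts that your $\hat C$ construction makes explicit. Two small points: in the involution-exclusion step you say $\pi(yg)$ lies in the coset $x\,C_{V_K}(\bar w)$ and call this the nontrivial coset of $C_{V_K}(\bar w)$ in $C_{W_K}(\bar w)$; since $x=w_{V_K}w_{W_K}$ need not centralize $\bar w$, you should say instead that $\pi(yg)\in C_{W_K}(\bar w)\setminus C_{V_K}(\bar w)=y\,C_{V_K}(\bar w)$ --- the argument is otherwise unaffected. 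And your closing caveat about Theorems~\ref{thm:2} and~\ref{3.2.11} is unnecessary: you apply both inside $W_K$ to $\bar w=w_\lambda$, which is of minimal length in its $W_K$-conjugacy class, so their hypotheses hold verbatim.
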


\begin{proof}
  Suppose first that $W$ is a direct product $W_1 \times W_2$ of
  nontrivial standard parabolic subgroups $W_1$ and $W_2$, that $w =
  w_1 w_2$ with $w_1 \in W_1$ and $w_2 \in W_2$, and that $w_1$ lies
  in a non-compliant class of $W_1$.  Then $W_J = W_{J_1} \times
  W_{J_2}$ for certain subsets $J_1 \subseteq W_1 \cap S$ and $J_2
  \subseteq W_2 \cap S$.  If $C_{W_{J_1}}(w_1)$ has no complement in
  $C_{W_1}(w_1)$ then $C_{W_J}(w)$ cannot have a complement in $C_W(w)
  = C_{W_1}(w_1) \times C(w_2)$.

  Next,  suppose that
$w \in W_L$ for some
 $L \subseteq S$ and that
$w$ lies in a non-compliant
class of $W_L$.
  Suppose $N$ is a complement
of $C_{W_J}(w)$ in $C_W(w)$, that is
$C_W(w) = C_{W_J}(w) \rtimes N$.
Then the centralizer of $w$ in  $W_L$, 
\begin{align*}
  C_{W_L}(w) 
= C_W(w) \cap W_L 
= (C_{W_J}(w) \rtimes N) \cap W_L
= C_{W_J}(w) \rtimes (N \cap W_L),
\end{align*}
in contradiction to our assumption that $w$ lies in a
non-compliant class.

The theorem now follows from Definition~\ref{def:non-compliant} and
Proposition~\ref{prop:non-compliant}.
\end{proof}

\section{Applications.}
\label{sec:application}

In this section we first use Theorem \ref{thm:1} to prove a result about
minimal length representatives of conjugacy classes. Then we show how it
implies the celebrated Solomon's character formula. Finally, we
discuss the interpretation of Solomon's theorem as a Coxeter group
analogue of MacMahon master theorem.

\begin{Theorem} 
  \label{thm:3} 
  Assume $w$ has minimal length in its conjugacy class in $W$. Then the
  following hold for any $v \in W$:
  \begin{enumerate}
  \item $J(w^v) = \DD(v^{-1}) \iff v = w_{J(w)}$;
  \item $J(w^v) = \AA(v^{-1}) \iff v = w_{J(w)} w_0$.
  \end{enumerate}
\end{Theorem}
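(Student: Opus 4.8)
The plan is to prove part~(i) in full and then obtain part~(ii) from it by substitution. For the substitution, put $w' = w^{w_0} = w_0 w w_0$. Conjugation by $w_0$ is a length‑preserving automorphism of $(W,S)$, so $w'$ again has minimal length in its conjugacy class, $J(w') = w_0\, J(w)\, w_0$, and $w_{J(w')} = w_0\, w_{J(w)}\, w_0$. For $g \in W$ one has $(w')^{w_0 g} = w^{g}$ and $(w_0 g)^{-1} = g^{-1} w_0$, so applying part~(i) to $w'$ with $w_0 g$ in place of $v$ yields: $J(w^{g}) = \DD(g^{-1} w_0)$ if and only if $w_0 g = w_0\, w_{J(w)}\, w_0$, i.e.\ $g = w_{J(w)} w_0$. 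Finally $\ell(h w_0) = \ell(w_0) - \ell(h)$ for all $h$ shows that $\DD(g^{-1} w_0) = \AA(g^{-1})$, turning this into~(ii). So the content is in part~(i); write $J = J(w)$.

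The easy direction of~(i) is quick: if $v = w_J$ then $v^{-1} = w_J$ has $\DD(w_J) = J$, while $w^{w_J} = w_J w w_J$ lies in $W_J$ in the same $W_J$‑conjugacy class as $w$; that class is cuspidal in $W_J = W_{J(w)}$ since $w$ has minimal length (remark after Theorem~\ref{3.2.11}), so $w^{w_J}$ lies in no proper standard parabolic subgroup of $W_J$ and hence $J(w^{w_J}) = J = \DD(v^{-1})$.

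For the converse, suppose $K := J(w^v) = \DD(v^{-1})$; the goal is $v = w_J$, and the first and main step is to show $\ell(w^v) = \ell(w)$. By Lemma~\ref{1.5.2} applied to $v^{-1}$ we may write $v^{-1} = w_K \cdot z$ with $z \in X_K$, so $v = z^{-1} \cdot w_K$ and $w^v = w_K (z w z^{-1}) w_K$. Since $J(w^v) = K$ forces $w^v \in W_K$, the element $p := z w z^{-1} = w_K w^v w_K$ lies in $W_K$; as $w_K$ is the longest element of $W_K$, conjugation by $w_K$ permutes $K$ and hence preserves length inside $W_K$, so $\ell(w^v) = \ell(p)$. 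On the other hand $w = z^{-1} p z$ with $z^{-1} \in X_K^{-1}$, so $\ell(z^{-1} p) = \ell(z^{-1}) + \ell(p)$ and therefore $\ell(w) = \ell((z^{-1}p)\,z) \geq \ell(z^{-1}p) - \ell(z) = \ell(p) = \ell(w^v)$. Conversely $\ell(w^v) \geq \ell(w)$ because $w$ has minimal length in its class, so $\ell(w^v) = \ell(w)$.

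With the length equality established — this is the crux — the conclusion follows by descent‑set bookkeeping. Proposition~\ref{prop:1}(iv) gives, writing $v = u \cdot x$ with $u \in W_J$ and $x \in X_J$, that $K = J(w^v) = J^x$ (so in particular $J^x \subseteq S$). From the reduced factorization $v^{-1} = x^{-1} \cdot u^{-1}$ one sees $\DD(x^{-1}) \subseteq \DD(v^{-1}) = J^x$, while $\DD(x^{-1}) \cap J^x = \emptyset$, since $s \in \DD(x^{-1}) \cap J^x$ would give $\ell(tx) = \ell(xs) < \ell(x)$ with $t = x s x^{-1} \in J$, contradicting $x \in X_J$. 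Hence $\DD(x^{-1}) = \emptyset$, so $x = 1$, $v = u \in W_J$ and $K = J$; then $\DD(v^{-1}) = J$ with $v^{-1} \in W_J$ forces $v^{-1} = w_J$, i.e.\ $v = w_J$, completing~(i). I expect the length equality $\ell(w^v) = \ell(w)$ to be the only real obstacle; note that it is exactly there that the hypothesis $J(w^v) = \DD(v^{-1})$ does its work, by forcing $p = w_K w^v w_K \in W_K$, without which $w^v$ need not be of minimal length and $J^x$ need not lie in $S$, so Proposition~\ref{prop:1}(iv) would be unavailable.
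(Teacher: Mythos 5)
Your proof is correct, and it shares its skeleton with the paper's while diverging in the endgame. The length step is essentially the paper's: the paper also writes $v^{-1} = w_L \cdot x$ with $L = \DD(v^{-1})$ via Lemma~\ref{1.5.2}, notes that conjugation by $w_L$ preserves length inside $W_L$, and cites Lemma~\ref{2.1.14} to get $\ell(w^v) = \ell(w)$; your $p = w_K w^v w_K$ is exactly $(w^v)^{w_K}$, and your inequality $\ell(z^{-1}pz) \geq \ell(z^{-1}p) - \ell(z)$ is an inline re-proof of Lemma~\ref{2.1.14}. Both proofs then invoke Proposition~\ref{prop:1}(iv), which you correctly read in the form $J(w^v) = J^x$ (as its proof establishes and as the paper itself uses it, despite the statement's typo). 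Where you genuinely differ is the conclusion: the paper stays with the decomposition $v^{-1} = w_L \cdot x$, $x \in X_L$, and rules out $\ell(x) > 0$ via Theorem~\ref{2.3.3}, since a descent $s$ of $x$ gives $v^{-1} = w_M \cdot y$ with $M = L \cup \{s\}$ and hence the contradiction $M \subseteq \DD(v^{-1}) = L$. You instead pass to the decomposition $v = u \cdot x$ with $x \in X_{J(w)}$ and eliminate $x$ by the elementary observation that $\DD(x^{-1}) \subseteq \DD(v^{-1}) = J^x$ while $\DD(x^{-1}) \cap J^x = \emptyset$ (the latter being just a restatement of $x \in X_J$), finishing with the standard fact that an element of $W_J$ whose descent set contains $J$ is $w_J$. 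This avoids Theorem~\ref{2.3.3} entirely and is more self-contained, at no extra cost. You also make explicit the easy implication $v = w_{J(w)} \Rightarrow J(w^v) = \DD(v^{-1})$ via cuspidality of the class of $w$ in $W_{J(w)}$, which the paper leaves implicit, and your reduction of (ii) to (i) by conjugating $w$ with $w_0$ is the same $w_0$-symmetry the paper uses, merely packaged as a substitution $v = w_0 g$ instead of $v = x w_0$.
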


\begin{proof}
  Let $L = \DD(v^{-1})$.  Then $v^{-1} = w_L \cdot x$ for some $x \in
  X_L$, by Lemma~\ref{1.5.2}.  Clearly $\ell((w^v)^{w_L}) =
  \ell(w^v)$, since $J(w^v) = L$.  By Lemma~\ref{2.1.14}, conjugation
  by the coset representative $x$ does not decrease the length, hence
  $\ell(w^v) = \ell((w^v)^{w_L}) \leq \ell((w^v)^{w_L x}) = \ell(w)$
  and it follows that $w^v$ has minimal length in its conjugacy class
  as well. By Proposition~\ref{3.1.11}, $L = J(w^v)$ and $J(w)$ are
  conjugate subsets of~$S$.  Proposition \ref{prop:1} (iv) says
  more precisely that $x$ is a conjugating element, i.e., $L^x = J(w)$.

  Assume that $\ell(x) > 0$ and let $s \in \DD(x)$. Then $s \notin L$,
  since $x \in X_L$; denote $L \cup \{s\}$ by $M$. By
  Theorem~\ref{2.3.3}, $x$ is a reduced product $x = d \cdot y$ with
  $y \in X_M$ and $d = w_L w_M$, the longest coset representative of
  $W_L$ in $W_M$. It follows that $v^{-1} = w_L \cdot x = w_M \cdot
  y$, whence $M \subseteq \DD(v^{-1}) = L \subsetneq M$.  The
  contradiction shows that $x = 1$, and therefore $v = w_L$ and $L =
  J(w)$.

  (ii)   Note  that   $J(w^{w_0})  =   J(w)^{w_0}$,   $\DD(x^{w_0})  =
  \DD(x)^{w_0}$, and $\AA(x)  = \DD(x w_0)$ for all  $x \in W$, whence
  $\DD(x)^{w_0} = \AA(w_0 x)$.  Therefore, it follows from (i) that
  \begin{align*}
    J(w^{x w_0}) = J(w^x)^{w_0} = \DD(x^{-1})^{w_0} = \AA((x
    w_0)^{-1}) \iff x = w_{J(w)},
  \end{align*}
  as desired, for $v = x w_0$.
\end{proof}

The following formula, first proved by Solomon~\cite{Solomon1966}
in 1966, is an easy consequence of the previous result.

\begin{Theorem}[Solomon's theorem]
  For $J \subseteq S$, let $\pi_J$ denote the permutation character of
  the action of $W$ on the cosets of $W_J$ defined by $\pi_J(w) =
  \Size{\Fix_{W/W_J}(w)}$, and let $\epsilon$ be the sign character of
  $W$, defined by $\epsilon(w) = (-1)^{\ell(w)}$ for $w \in W$. Then
  \begin{align*}
    \sum_{J \subseteq
    S} (-1)^{\Size{J}} \pi_J = \epsilon.
  \end{align*}
\end{Theorem}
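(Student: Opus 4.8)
The plan is to evaluate both sides of Solomon's identity at an arbitrary element $g \in W$ and show they agree. Since $\pi_J(g) = |\Fix_{W/W_J}(g)|$ counts the cosets $W_J x$ (writing $x \in X_J$ for the minimal length representative) that are fixed by right multiplication... more precisely, $g$ acts on $W/W_J$ and $W_Jx$ is fixed iff $x g x^{-1} \in \,$... let me instead use the conjugation-flavoured bookkeeping that Theorem~\ref{thm:3} is tailored for. The cleanest route is: conjugate so that $g = w^v$ for a fixed $w$ of minimal length in its conjugacy class and $v$ ranging over a transversal, but since $g$ is fixed it is better to fix $g$ and let $w$ be the (a) minimal length element in the class of $g$, say $g = w^{v}$ with $v$ chosen appropriately. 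The key combinatorial object is: for a fixed element $g$, and for each $J \subseteq S$, the number of cosets of $W_J$ fixed by $g$ equals the number of $x \in X_J$ with $g^x \in W_J$, equivalently (using that $g^x$ then has length $\geq \ell(g)$ and actually, when $g = w^v$ has minimal length in its class, one can pin things down) the number of $v' $ in the class-of-$g$-transversal with $J(g^{v'}) \subseteq$ something. The point of Theorem~\ref{thm:3} is that it identifies, for $w$ of minimal length, exactly which $v$ give $J(w^v) = \DD(v^{-1})$ (namely $v = w_{J(w)}$) and which give $J(w^v) = \AA(v^{-1})$ (namely $v = w_{J(w)}w_0$); a signed count over all $J$ collapses by inclusion–exclusion to the contribution of these two distinguished $v$'s.

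Concretely, I would argue as follows. Evaluate $\sum_J (-1)^{|J|}\pi_J$ at $g$. Rewrite $\pi_J(g) = \#\{x \in X_J : g^x \in W_J\}$ (standard: $W_Jx \mapsto W_Jx$ under $g$ iff $xgx^{-1}\in W_J$; and $x$ here is the distinguished representative). Summing with signs, $\sum_J(-1)^{|J|}\pi_J(g) = \sum_{x}\ \sum_{J : x \in X_J,\ g^x \in W_J} (-1)^{|J|}$ where the outer sum is over all $x \in W$. For fixed $x$, the set of $J$ with $x \in X_J$ is $\{J : J \subseteq \AA(x)\}$, and among those the condition $g^x \in W_J$ says $J \supseteq J(g^x)$. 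So the inner sum is $\sum_{J(g^x) \subseteq J \subseteq \AA(x)} (-1)^{|J|}$, which is $0$ unless $J(g^x) = \AA(x)$, in which case it is $(-1)^{|\AA(x)|}$. Hence
\begin{align*}
  \sum_{J \subseteq S}(-1)^{|J|}\pi_J(g) = \sum_{\substack{x \in W\\ J(g^x) = \AA(x)}} (-1)^{|\AA(x)|}.
\end{align*}
Now I must understand $\{x : J(g^x) = \AA(x)\}$. Here is where I pass to the conjugacy class: pick $w$ of minimal length with $g \sim w$, and write $g = w^{c}$ for a fixed $c$; then $g^x = w^{cx}$, and the condition $J(w^{cx}) = \AA(x) = S \setminus \DD(x)$. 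Set $v^{-1} = $ ... I want to match the shape of Theorem~\ref{thm:3}(ii), which concerns $J(w^v) = \AA(v^{-1})$. So with $v = cx$ I need $\AA((cx)^{-1})$; this does not literally equal $\AA(x)$ in general, so the honest version is to do the whole inclusion–exclusion intrinsically in terms of $w$ and the conjugating element, i.e. replace the sum over $x \in W$ by a sum over $v \in W$ via $v \leftrightarrow$ (its factorization through the centralizer), using Theorem~\ref{thm:1}(i)/Theorem~\ref{thm:2} to control the overcounting. The upshot I expect is that the signed sum detects precisely $v = w_{J(w)}w_0$ (the unique solution of $J(w^v) = \AA(v^{-1})$ by Theorem~\ref{thm:3}(ii)), contributing sign $(-1)^{\ell(w_{J(w)}w_0)} = (-1)^{\ell(w_0) - \ell(w_{J(w)})}$, and one checks this equals $\epsilon(g) = (-1)^{\ell(w)}$ using $\ell(w_{J(w)}w_0) \equiv \ell(w) \pmod 2$ — which holds because $w$ and $w_{J(w)}$ have the same support $J(w)$ and $\ell(w_0)-\ell(w_{J(w)})$, modulo the interplay with the length of $w$ inside $W_{J(w)}$... this parity comparison is the step I would want to nail down carefully.

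The main obstacle, then, is the bookkeeping that relates the sum over all $x \in W$ (from the coset/inclusion–exclusion manipulation) to the statement of Theorem~\ref{thm:3}, which is phrased for a single minimal-length $w$ and a conjugating $v$: one must account for the fact that each group element $g$ in the class of $w$ is hit by $|C_W(w)|$ choices of $v$, and that $J(g^x)$ only depends on the class and on the "angle" of approach, not on $x$ itself. I expect Proposition~\ref{prop:1}(iv) to be exactly the tool that guarantees $J(w^v)$ behaves well (it equals $J^x$ when $\ell(w^v) = \ell(w)$), so that the only $v$ contributing to the signed count within each class are the two distinguished ones, and then Theorem~\ref{thm:3} isolates which of those survives after attaching the sign $(-1)^{|\AA|}$. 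Once the sum collapses to a single term per conjugacy class with the right sign, matching $\epsilon$ is a short length-parity computation, and Solomon's identity follows. The delicate points are (a) getting the dictionary between the coset count and the conjugation picture exactly right including multiplicities by $|C_W(w)|$, and (b) the final parity check $\ell(w_{J(w)}w_0) \equiv \ell(w) \pmod 2$.
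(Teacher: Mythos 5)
Your opening moves coincide with the paper's own proof: rewriting $\pi_J(g)$ as a count over minimal coset representatives and collapsing the signed sum over $J$ by the binomial theorem to $\sum_{x:\,J(g^x)=\AA(x)}(-1)^{\Size{\AA(x)}}$ is exactly the argument given there. The gap comes immediately afterwards. The paper's key (and very short) step is that $\sum_{J\subseteq S}(-1)^{\Size{J}}\pi_J$ is a class function, so one may assume from the outset that the element being evaluated is itself of minimal length in its conjugacy class; the surviving condition is then literally $J(w^v)=\AA(v^{-1})$ with $v=x^{-1}$, and Theorem~\ref{thm:3}(ii) says there is exactly one such $v$, namely $v=w_{J(w)}w_0$. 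You instead keep a general $g=w^{c}$ and try to force $J(w^{cx})=\AA(x)$ into the shape of Theorem~\ref{thm:3}(ii) by ``controlling the overcounting'' through the centralizer and multiplicities $\Size{C_W(w)}$; that detour is unnecessary (no transversal or multiplicity bookkeeping is needed once you evaluate at a minimal-length representative), and, more importantly, you never carry it out --- it is precisely the step you flag as the main obstacle, so the argument is not complete as written.

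Second, your final sign is wrong as stated. After the collapse, the unique surviving term contributes $(-1)^{\Size{\AA(x)}}$, the size of an ascent set, not $(-1)^{\ell(x)}$; you replace it by $(-1)^{\ell(w_{J(w)}w_0)}$ and propose to verify $\ell(w_{J(w)}w_0)\equiv\ell(w)\pmod 2$. That congruence is false in general: for $W=\Sym_3$ and $w=s_1$ one has $\ell(w_{J(w)}w_0)=\ell(s_2s_1)=2$ while $\ell(w)=1$. The correct finish (as in the paper) is $\Size{\AA(w_0w_{J(w)})}=\Size{\DD(w_{J(w)})^{w_0}}=\Size{\DD(w_{J(w)})}=\Size{J(w)}$, followed by the fact that $(-1)^{\Size{J(w)}}=(-1)^{\ell(w)}$ for $w$ of minimal length in its conjugacy class (\cite[Exercise 3.17]{GePf2000}); in the example above this gives $(-1)^{\Size{\AA(s_1s_2)}}=(-1)^1=\epsilon(s_1)$, as it should. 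So the missing ingredients are exactly the class-function reduction and the ascent-set (rather than length) parity identity; with those supplied, your computation becomes the paper's proof.
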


\begin{proof}
  The formula follows if we can show that $\sum_{J \subseteq S} (-1)^{\Size{J}} \pi_J(w) =
  \epsilon(w)$ for all $w \in W$. 
  We have $W_J x w = W_J x \iff x w x^{-1} \in W_J$, so
  \begin{align*}
    \sum_{J \subseteq S} (-1)^{\Size{J}} \pi_J(w) 
    = \sum_{J \subseteq S} (-1)^{\Size{J}} \sum_{\substack{x \in X_J\\ xwx^{-1} \in W_J}} 1 
    = \sum_{x \in W} \sum_{J(xwx^{-1}) \subseteq J \subseteq \AA(x)} (-1)^{\Size{J}},
\end{align*}
where we reversed the order of summation and used the facts that $x
\in X_J \iff J \subseteq \AA(x)$ and $xw x^{-1} \in W_J \iff J(x w x^{-1})
\subseteq J$.  The Binomial Theorem implies that
\begin{align*}
  \sum_{A \subseteq J \subseteq B} (-1)^{\Size{J}} = (-1)^{\Size{A}} \sum_{I \subseteq B \setminus A} (-1)^{\Size{I}} = 
  \begin{cases}
    (-1)^{\Size{A}} & \text{if } A = B, \\ 
    0 &  \text{otherwise.} 
  \end{cases}
\end{align*}
But then
\begin{align*}
 \sum_{J \subseteq S} (-1)^{\Size{J}} \pi_J(w) = \sum_{\substack{x \in W \\ J(xwx^{-1}) =\AA(x)}} (-1)^{\Size{\AA(x)}}.
\end{align*}
Since $\sum_{J \subseteq S} (-1)^{\Size{J}} \pi_J$ is a class function, it is enough to choose 
 $w$ with minimal length in its conjugacy class. By Theorem~\ref{thm:3}(ii), this sum then 
 consists only of the one term for $x^{-1} = w_{J(w)} w_0$, and we have
\begin{align*} 
\sum_{J \subseteq S} (-1)^{\Size{J}} \pi_J(w) = (-1)^{\Size{\AA(w_0 w_{J(w)})}}.
\end{align*}
But $\Size{\AA(w_0 w_{J(w)})}
= \Size{\DD(w_{J(w)})^{w_0}}
= \Size{\DD(w_{J(w)})} =
\Size{J(w)}$ and then the claim follows from  the fact that $(-1)^{\Size{J(w)}} =
(-1)^{\ell(w)}$~\cite[Exercise 3.17]{GePf2000}.
\end{proof}

Solomon proves this formula generically for all types of finite
Coxeter groups, and he has published three different versions of the proof.  His
original proof~\cite{Solomon1966} depends on an application the Hopf
trace formula to the Coxeter complex of the finite Coxeter group $W$,
a later proof (of a more general statement) uses a decomposition of
the group algebra of $W$.  The third version of the
proof~\cite{Solomon1976} is based on properties of a homomorphism of
the descent algebra of $W$ into the character ring of~$W$ (see also
\cite[Exercise 3.15]{GePf2000}).  None of these proofs have the
combinatorial flavor of the above proof.

Finally, let us explain how to interpret Solomon's theorem as a generalization 
of a special case of the celebrated MacMahon master theorem. For a connection with a
different result due to MacMahon, see \cite[\S 6]{Solomon1968}.

MacMahon's master theorem states that for a matrix $X = (x_{ij})_{n\times n}$, 
the functions
\begin{equation} \label{macmahon}
 \frac 1{\det(\mathrm{Id}-X)},
\end{equation}
where $\mathrm{Id}$ denotes the $n \times n$-identity matrix, and
\begin{align*}
  \sum_{w} x_{v_1 w_1} x_{v_2 w_2} \cdots x_{v_m w_m},
\end{align*}
where $w = w_1 w_2 \cdots w_m$ 
runs over all words in $\{1,2,\ldots,n\}$ and $v = v_1 v_2 \cdots v_m$ is the
weakly increasing rearrangement of $w$, are equal. In particular, for
any permutation $w \in S_n$, the coefficient of $x_{1w(1)}\cdots x_{n
w(n)}$ in \eqref{macmahon} is equal to $1$. For $I \subseteq [n]$, denote by
$X_I$ the submatrix $(x_{ij})_{i,j \in I}$.  We have
\begin{multline*}
  \frac 1{\det(\mathrm{Id}-X)} = \frac 1{\sum_{I \subseteq [n]} (-1)^{\Size{I}} \det
X_I} = \frac 1{1 - \sum_{\emptyset \neq I \subseteq [n]} (-1)^{\Size{I}-1}
\det X_I} \\
= \sum_{k \geq 0} \Bigl( \sum_{\emptyset \neq I \subseteq
[n]} (-1)^{\Size{I}-1} \det X_I\Bigr)^k = \sum_{k \geq 0} \sum
(-1)^{\Size{I_1}-1 + \ldots + \Size{I_k}-1} \det X_{I_1} \cdots \det X_{I_k},
\end{multline*}
where the last sum runs over all $k$-tuples $(I_1,\ldots,I_k)$ of
non-empty subsets of $[n]$. Since we are interested in the coefficient
of $x_{1w(1)}\cdots x_{n w(n)}$ (in which all indices are represented,
and each index is represented only twice, once as a first index and
once as a second index), we can limit the sum to ordered set
partitions $(I_1,\ldots,I_k)$ of the set $[n]$. Note that we have
$(-1)^{\Size{I_1}-1 + \ldots + \Size{I_k}-1} = (-1)^{n-k}$.

Recall that the symmetric group $S_n$ is a Coxeter group $W$ of type $A_{n-1}$ with Coxeter generators
$S = \{s_1,\ldots,s_{n-1}\}$, $s_i = (i,i+1)$. Choose a composition $\lambda \vdash n$.
By Merris-Watkins formula~\cite{MW85} (and not hard to prove independently), 
the coefficient of $x_{1w(1)}\cdots x_{n w(n)}$ in
\begin{align*}
  \sum \det X_{I_1} \cdots \det X_{I_k},
\end{align*}
where the sum runs over all ordered set partitions $(I_1,\ldots,I_k)$
of $[n]$ with $\Size{I_j} = \lambda_j$ for all $j$, is equal to
$(-1)^{\ell(w)} \pi_J(w)$, where $J$ is the subset of $S$ that corresponds
to the composition $\lambda$. This means that
\begin{align*}
  \sum (-1)^{n - \Size{\lambda}} (-1)^{\ell(w)} \pi_J(w) = 1,
\end{align*}
where the sum runs over all subsets $J$ of $S$. This is obviously 
equivalent to Solomon's theorem.

\subsection*{Acknowledgement}
The second author wishes to acknowledge support from Science
Foundation Ireland.

\bibliography{centrali}
\bibliographystyle{amsplain}

\end{document}